\theoremstyle{plain} 
\newtheorem{theorem}{Theorem}[section]
\newtheorem{proposition}[theorem]{Proposition}
\newtheorem{conjecture}[theorem]{Conjecture}
\newtheorem{lemma}[theorem]{Lemma}
\newtheorem{corollary}[theorem]{Corollary}
\theoremstyle{definition} 
\newtheorem{definition}[theorem]{Definition}
\theoremstyle{remark} 
\newtheorem{remark}[theorem]{Remark}
\newcommand{\bbR}{\mathbb{R}}
\newcommand{\bbE}{\mathbb{E}}
\newcommand{\bfp}{\mathbf{p}}
\newcommand{\bfq}{\mathbf{q}}
\newcommand{\Vol}{\mathrm{Vol}}
\newcommand{\Bdy}{\mathrm{Bdry}}
\newcommand{\Conv}{\mathrm{Conv}}
\begin{document}

\title{Strict Kneser-Poulsen conjecture for large radii\thanks{Research supported in part by NSF Grant No. DMS–0209595 (USA).}
}

\author{Igors Gorbovickis\thanks{Department of Mathematics, Cornell University, Ithaca, NY 14853, USA. E-mail: igorsgor@math.cornell.edu}
}

\maketitle

\begin{abstract}
In this paper we prove the Kneser-Poulsen conjecture for the case of large radii. Namely, if a finite number of points in Euclidean space $\bbE^n$ is rearranged so that the distance between each pair of points does not decrease, then there exists a positive number $r_0$ that depends on the rearrangement of the points, such that if we consider $n$-dimensional balls of radius $r>r_0$ with centers at these points, then the volume of the union (intersection) of the balls before the rearrangement is not less (not greater) than the volume of the union (intersection) after the rearrangement. Moreover, the inequality is strict whenever the new point set is not congruent to the original one. Also under the same conditions we prove a similar result about surface volumes instead of volumes.

In order to prove the above mentioned results we
use ideas from tensegrity theory
to strengthen the theorem of Sudakov~\cite{Sudakov}, R. Alexander~\cite{Alexander85} and Capoyleas and Pach~\cite{CapPach}, which says that the mean width of the convex hull of a finite number of points does not decrease after an expansive rearrangement of those points. In this paper we show that the mean width increases strictly, unless the expansive rearrangement was a congruence.

We also show that if the configuration of centers of the balls is fixed and the volume of the intersection of the balls is considered as a function of the radius $r$, then the second highest term in the asymptotic expansion of this function is equal to $-M_nr^{n-1}$, where $M_n$ is the mean width of the convex hall of the centers. This theorem was conjectured by Bal\'{a}zs Csik\'{o}s in 2009.
\end{abstract}

\section{Introduction}
\subsection{Notation}
Let $|\ldots|$ be the Euclidean norm. Let $\bfp= (\bfp_1,\ldots , \bfp_N)$ and $\bfq= (\bfq_1,\ldots , \bfq_N)$ be two configurations of $N$ points, where each $\bfp_i\in\bbE^n$ and each $\bfq_i\in\bbE^n$. If for all $1\le i<j\le N$, $|\bfp_i-\bfp_j|\le |\bfq_i-\bfq_j|$, we say that $\bfq$ is an \textit{expansion} of $\bfp$ and $\bfp$ is a \textit{contraction} of $\bfq$. We denote by $B_n(\bfp_i, r_i)$ the closed $n$-dimensional ball of radius $r_i\ge 0$ in $\bbE^n$ about the point $\bfp_i$, and let $\Vol_n$ represent the $n$-dimensional volume.

\subsection{Kneser-Poulsen conjecture and supporting results}
This work was motivated by the following longstanding conjecture independently stated by Kneser in~\cite{Kneser} and Poulsen in~\cite{Poulsen} for the case when $r_1=\dots =r_N$:

\begin{conjecture}\label{KP1}
If $\bfq= (\bfq_1,\ldots , \bfq_N)$ is an expansion of $\bfp= (\bfp_1,\ldots , \bfp_N)$ in $\bbE^n$, then
\begin{equation*}
\Vol_n\left[\bigcup_{i=1}^N B_n(\bfp_i, r_i)\right] \le \Vol_n\left[\bigcup_{i=1}^N B_n(\bfq_i, r_i)\right].
\end{equation*}
\end{conjecture}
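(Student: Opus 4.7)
My plan is to execute the Csikos–Bezdek–Connelly continuous-motion strategy: lift the two configurations to a common high-dimensional space in which an expansive homotopy exists, differentiate the volume along that homotopy, and then descend back to $\bbE^n$. First, I would embed $\bfp$ and $\bfq$ isometrically into $\bbE^{2N}$ via maps $\alpha,\beta$ chosen so that the linear homotopy $\bfp_i(t) := (1-t)\alpha(\bfp_i) + t\beta(\bfq_i)$ is expansive, i.e., $\frac{d}{dt}|\bfp_i(t)-\bfp_j(t)|^2 \ge 0$ for all $t\in[0,1]$. Each pairwise squared distance is a quadratic polynomial in $t$ with non-negative leading coefficient, so the expansiveness requirement reduces to a finite set of conditions at $t=0$, achievable by a suitable rigid motion of $\alpha(\bfp)$ inside the auxiliary $\bbE^{2N}$.

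With such a motion in hand, the second step is to apply Csikos' integral formula for the derivative of the volume of a moving union of balls:
\begin{equation*}
\frac{d}{dt}\Vol_{2N}\!\left[\bigcup_i B_{2N}(\bfp_i(t), r_i)\right] \;=\; \sum_{i<j} w_{ij}(t)\, \frac{d}{dt}|\bfp_i(t)-\bfp_j(t)|,
\end{equation*}
where each $w_{ij}(t)\ge 0$ is the $(2N-1)$-dimensional measure of the wall of the weighted Voronoi (power) diagram shared by the $i$-th and $j$-th balls. By construction the derivatives on the right are non-negative, so the $(2N)$-dimensional volume of the union is non-decreasing in $t$, yielding the Kneser–Poulsen inequality for the lifted configurations in $\bbE^{2N}$.

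The third step, and the crux, is to descend the inequality from $\bbE^{2N}$ back to $\bbE^n$. I would arrange at the endpoints $t=0,1$ that $\alpha,\beta$ land inside a fixed $n$-dimensional affine subspace $\bbE^n\subset\bbE^{2N}$ coinciding with the original configurations, and then try to recover $\Vol_n\bigl[\bigcup B_n(\bfp_i, r_i)\bigr]$ from $\Vol_{2N}\bigl[\bigcup B_{2N}(\bfp_i, r_i)\bigr]$ through a slicing or co-area argument along the orthogonal complement $\bbE^{2N-n}$. For $n=2$ a cylindrical-section identity of Bezdek and Connelly accomplishes exactly this and yields the full planar conjecture.

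The main obstacle is this descent in general dimension. An $n$-dimensional affine slice of a $(2N)$-ball of radius $r$ is an $n$-ball whose radius depends on how far the slice sits from the center, so averaging $\Vol_n$ of parallel slices against any fixed weight reshuffles the $r_i$ in a way that does not commute with the expansive motion constructed in Step~1. Bridging this gap — for instance by combining the tensegrity-enhanced mean-width monotonicity announced in the abstract of this paper with a Steiner-type expansion of $\Vol_n$ in the $r_i$, or by finding a genuinely new integral transform relating the $n$- and $(2N)$-dimensional unions — is precisely where every known attack on the Kneser–Poulsen conjecture stalls; this is why, outside the plane, only partial results such as the large-radius statement proved in this paper are currently within reach.
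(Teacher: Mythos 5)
The statement you were asked about is Conjecture~\ref{KP1}, the Kneser--Poulsen conjecture itself; the paper does not prove it and explicitly states that it remains open for $n\ge 3$ (only the planar case and the continuous-expansion case are known, and this paper contributes only the large-radius version, Theorem~\ref{main1}). So there is no proof in the paper to compare against, and your text is not a proof either: it is a survey of the Bezdek--Connelly attack that ends by conceding that its crucial step cannot be carried out. That concession is accurate, but it means the proposal establishes nothing.

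Two concrete points about where the argument breaks. First, a minor one: the linear homotopy $\bfp_i(t)=(1-t)\alpha(\bfp_i)+t\beta(\bfq_i)$ between copies of $\bfp$ and $\bfq$ placed in orthogonal factors of $\bbE^{2N}$ is \emph{not} expansive --- the squared distance $(1-t)^2|\bfp_i-\bfp_j|^2+t^2|\bfq_i-\bfq_j|^2$ is decreasing near $t=0$. One must instead use the trigonometric reparametrization of Lemma~1 of~\cite{BezdekConnelly}, which gives squared distance $|\bfp_i-\bfp_j|^2+\sin^2(\tfrac{\pi t}{2})\bigl(|\bfq_i-\bfq_j|^2-|\bfp_i-\bfp_j|^2\bigr)$; this is fixable. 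Second, and fatally: the only known bridge between volumes in different ambient dimensions is the identity $V_n(\bfp,r)=\frac{1}{2\pi r}\frac{d}{dr}V_{n+2}(\bfp,r)$ (Lemma~\ref{l3} here, Lemma~7 of~\cite{BezdekConnelly}), and differentiation in $r$ does not preserve inequalities. Monotonicity of $V_{n+2}(\bfp(t),r)$ in $t$ therefore gives no direct information about $V_n$. Bezdek and Connelly salvage the case of \emph{two} extra dimensions by differentiating Csik\'os's formula in $r$ and using that each Voronoi wall volume $w_{ij}(t,r)$ is non-decreasing in $r$; iterating this for $2k$ extra dimensions with $k\ge 2$ would require sign control on higher $r$-derivatives of the $w_{ij}$, which fails. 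Since the guaranteed expansive motion lives in $\bbE^{2n}$, the descent closes only when $2n\le n+2$, i.e.\ $n=2$. There is no known integral transform that closes the gap in general dimension, which is precisely why the statement is still a conjecture and why this paper settles only the regime $r\ge r_0$ via the mean-width asymptotics of Theorems~\ref{StrictAlex} and~\ref{CsikCon}.
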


A similar conjecture was proposed in~\cite{KleeWagon} by Klee and Wagon:
\begin{conjecture}\label{KP2}
If $\bfq= (\bfq_1,\ldots , \bfq_N)$ is an expansion of $\bfp= (\bfp_1,\ldots , \bfp_N)$ in $\bbE^n$, then
\begin{equation*}
\Vol_n\left[\bigcap_{i=1}^N B_n(\bfp_i, r_i)\right] \ge \Vol_n\left[\bigcap_{i=1}^N B_n(\bfq_i, r_i)\right].
\end{equation*}
\end{conjecture}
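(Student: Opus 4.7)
The plan is to attack Conjecture~\ref{KP2} via the continuous-motion strategy going back to Csik\'os. Suppose one can construct a continuous path of configurations $\bfp(t) = (\bfp_1(t), \ldots, \bfp_N(t))$, $t \in [0,1]$, with $\bfp(0) = \bfp$ and $\bfp(1) = \bfq$, along which every pairwise distance is non-decreasing, i.e.\ $\frac{d}{dt}|\bfp_i(t) - \bfp_j(t)| \ge 0$ for all $i<j$ and a.e.\ $t$. Then Csik\'os's differentiation formula reads
\begin{equation*}
\frac{d}{dt} \Vol_n\left[\bigcap_{i=1}^N B_n(\bfp_i(t), r_i)\right] = -\sum_{i<j} A_{ij}(t)\, \frac{d}{dt}|\bfp_i(t) - \bfp_j(t)|,
\end{equation*}
where each weight $A_{ij}(t)\ge 0$ is the $(n-1)$-dimensional measure of the face between cells $i$ and $j$ in the power diagram of the current configuration, intersected with $\bigcap_k B_n(\bfp_k(t), r_k)$. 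Since each weight is non-negative and the motion is expansive, the right-hand side is non-positive; hence the intersection volume is non-increasing, and integrating over $[0,1]$ yields the conjecture.

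First I would derive Csik\'os's formula rigorously, by noting that on an open dense set of configurations the combinatorial type of the power diagram is locally constant and each flat face translates as a rigid body under the relative motion of the two centers bounding it; the non-generic case follows by approximation. Second, I would try to exhibit the required continuous expansive motion. The same path would simultaneously yield the union version, Conjecture~\ref{KP1}, via the dual Csik\'os identity, so both conjectures reduce to this single question.

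The main obstacle is precisely step two. In $\bbE^n$ the set of configurations with prescribed lower bounds on all pairwise distances is in general disconnected, so no continuous expansive motion from $\bfp$ to $\bfq$ need exist. The standard workaround of Bezdek--Connelly realizes any planar expansion as a continuous expansive motion in $\bbE^4$ and uses that the restriction of $4$-dimensional balls to a fixed affine $2$-plane is a family of $2$-dimensional balls of the same radii; this settles $n=2$. For $n \ge 3$ no analogous higher-dimensional motion is known, and this is the standing gap that keeps Conjecture~\ref{KP2} open in full generality.

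A complementary strategy would be to compare the power-diagram decompositions of the two intersections directly: extend the $1$-Lipschitz map $\bfq_i \mapsto \bfp_i$ to all of $\bbE^n$ via Kirszbraun's theorem and attempt to show it decreases volume cell by cell. The crucial missing ingredient is a combinatorial--geometric inequality on matched cells whose difficulty, as far as is known, is not lower than the conjecture itself. Accordingly, the present plan delivers a complete proof only for $n \le 2$; in higher dimensions a genuinely new idea is required, and the present paper circumvents the obstruction at the cost of restricting to the large-radius regime, via a mean-width based mechanism.
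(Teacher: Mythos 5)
The statement you were asked to prove is Conjecture~\ref{KP2}, which the paper itself does not prove --- it is stated as an open conjecture (due to Klee and Wagon) and explicitly remains unresolved for $n\ge 3$. Your proposal is therefore correctly \emph{not} a proof, and your own diagnosis of where it breaks is accurate: Csik\'os's differentiation formula does reduce the problem to exhibiting a continuous expansion from $\bfp$ to $\bfq$, such a motion need not exist in $\bbE^n$ itself, and the Bezdek--Connelly lift to $\bbE^{n+2}$ only closes the loop for $n=2$ (because the leapfrog construction lands in $\bbE^4$ and the dimension-reduction identity brings the volume comparison back down to the plane). For $n\ge 3$ the required motion would have to live in $\bbE^{n+2}$, and no such construction is known. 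So the ``gap'' in your argument is precisely the gap in the conjecture; you have not introduced any additional error, but you also have not proved the statement.

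One point worth sharpening: the paper's actual contribution relative to this conjecture is Theorem~\ref{main1}, which establishes inequality~(\ref{main_eq2}) (with equal radii $r_1=\dots=r_N=r$) for all $r$ larger than some $r_0$ depending on the configurations, with strictness unless $\bfp$ and $\bfq$ are congruent. The mechanism is entirely different from the continuous-motion strategy you outline: it compares the second-order terms in the asymptotic expansion of the intersection volume as $r\to\infty$, namely $\delta_n r^n - M_n[\Conv\{\bfp_1,\dots,\bfp_N\}]\,r^{n-1}+o(r^{n-1})$ (Theorem~\ref{CsikCon}), and then invokes the strict monotonicity of the mean width under expansions (Theorem~\ref{StrictAlex}). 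If your goal is to engage with what the paper actually proves, the object to attack is the large-radius statement and the mean-width inequality, not Conjecture~\ref{KP2} in full generality; the latter would require a genuinely new idea, as you say.
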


Even though 
for $n\ge 3$ these conjectures still remain a mystery, there are results that provide support for them with the additional assumption that there exists a \textit{continuous expansion} from $\bfp$ to $\bfq$ --- a continuous motion $\bfp(t)=(\bfp_1(t),\dots,\bfp_N(t))$, with $\bfp_i(t)\in\bbE^n$ for all $t\in [0,1]$ and $i=1,\dots, N$ such that $\bfp(0)=\bfp$ and $\bfp(1)=\bfq$, and $|\bfp_i(t)-\bfp_j(t)|$ is non-decreasing for all $1\le i<j\le N$.
Assuming that there exists a continuous expansion from $\bfp$ to $\bfq$, Csik\'os in~\cite{Csikos95},~\cite{Csikos98} and~\cite{Csikos01} proves Conjectures~\ref{KP1}~and~\ref{KP2} and similar conjectures on $n$-dimensional sphere and in $n$-dimensional hyperbolic space.

In~\cite{BezdekConnelly} Bezdek and Connelly prove Conjectures~\ref{KP1}~and~\ref{KP2} for $n=2$ by finding a continuous expansion from $\bfp$ to $\bfq$ in a $4$-dimensional Euclidean space and relating higher dimensional volumes with the $2$-dimensional ones. More precisely, they claim that if there exists a piecewise-analytic continuous expansion from $\bfp$ to $\bfq$ in $(n+2)$-dimensional Euclidean space, then Conjectures~\ref{KP1}~and~\ref{KP2} hold.

\subsection{Main results}
All of the above mentioned results are applicable only to special classes of expansive point rearrangements in $\bbE^n$.
In this paper we prove the following theorem, that holds for any expansion and provides additional support of the Kneser-Poulsen conjecture:
\begin{theorem}\label{main1}
If $\bfq= (\bfq_1,\ldots , \bfq_N)$ is an expansion of $\bfp= (\bfp_1,\ldots , \bfp_N)$ in $\bbE^n$, then there exists $r_0>0$ such that for any $r\ge r_0$
\begin{equation}\label{main_eq1}
\Vol_n\left[\bigcup_{i=1}^N B_n(\bfp_i, r)\right] \le \Vol_n\left[\bigcup_{i=1}^N B_n(\bfq_i, r)\right],
\end{equation}
and
\begin{equation}\label{main_eq2}
\Vol_n\left[\bigcap_{i=1}^N B_n(\bfp_i, r)\right] \ge \Vol_n\left[\bigcap_{i=1}^N B_n(\bfq_i, r)\right],
\end{equation}
and if the point configurations $\bfq$ and $\bfp$ are not congruent, then the inequalities are strict.
\end{theorem}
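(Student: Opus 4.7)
The plan is to reduce Theorem~\ref{main1} to two asymptotic ingredients announced in the abstract. The first is a two-term asymptotic expansion, as $r\to\infty$, of the volume of the intersection:
\[
\Vol_n\Big[\bigcap_{i=1}^N B_n(\bfp_i,r)\Big] = \kappa_n r^n - M_n(\Conv(\bfp))\,r^{n-1} + O(r^{n-2}),
\]
where $\kappa_n$ is the volume of the unit $n$-ball and $M_n$ is the mean width of the convex hull of the centers; this is the Csik\'os conjecture stated in the introduction. A parallel Minkowski-sum/Steiner-tube argument should give a matching expansion for the union, of the form
\[
\Vol_n\Big[\bigcup_{i=1}^N B_n(\bfp_i,r)\Big] = \kappa_n r^n + \alpha_n\, M_n(\Conv(\bfp))\,r^{n-1} + O(r^{n-2})
\]
for some positive dimensional constant $\alpha_n$. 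The second ingredient is the strict refinement of the Sudakov--Alexander--Capoyleas--Pach theorem announced in the abstract: if $\bfq$ is an expansion of $\bfp$ and $\bfq$ is not congruent to $\bfp$, then $M_n(\Conv(\bfp))<M_n(\Conv(\bfq))$.

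Granting both ingredients, Theorem~\ref{main1} follows at once. When $\bfp$ and $\bfq$ are congruent, \eqref{main_eq1} and \eqref{main_eq2} hold as equalities. Otherwise, subtracting the two expansions yields
\[
\Vol_n\Big[\bigcap_i B_n(\bfp_i,r)\Big] - \Vol_n\Big[\bigcap_i B_n(\bfq_i,r)\Big] = \bigl(M_n(\Conv(\bfq))-M_n(\Conv(\bfp))\bigr)\,r^{n-1} + O(r^{n-2}),
\]
whose leading coefficient is strictly positive by the strict mean-width inequality. One can therefore choose $r_0$ large enough, depending on the mean-width gap and the implicit constants in the error (both of which depend on the specific configurations $\bfp,\bfq$, exactly as the statement permits), so that the right-hand side is positive for all $r\ge r_0$, giving the strict inequality \eqref{main_eq2}. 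The argument for \eqref{main_eq1} is identical up to a sign.

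The main obstacle is the strict mean-width inequality. The classical proofs of the non-strict Sudakov--Alexander--Capoyleas--Pach theorem, whether via a Crofton-type integral-geometric formula or via the continuous-expansion argument of Capoyleas and Pach, do not isolate the equality case: directions in which the expansion happens to leave the support hyperplane unchanged contribute trivially, and there can be many such directions. Forcing strict monotonicity demands a rigidity statement asserting that any expansion leaving the mean width unchanged must be a congruence. As the abstract indicates, the appropriate framework is tensegrity theory; the plan is to attach to the putative mean-width-preserving expansion an equilibrium self-stress on the edges of the configuration and then invoke an infinitesimal rigidity argument to conclude that every pairwise distance is in fact preserved. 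By comparison, establishing the asymptotic expansions should reduce to a Steiner tube computation for the union and a careful spherical-cap analysis near the boundary for the intersection, which is technical but conceptually standard.
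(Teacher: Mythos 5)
Your reduction is exactly the paper's: inequality~(\ref{main_eq1}) follows from the Capoyleas--Pach expansion~(\ref{eq3}) of the union volume (whose $r^{n-1}$-coefficient is precisely $M_n$ with the paper's normalization, so your $\alpha_n=1$), inequality~(\ref{main_eq2}) from the matching expansion~(\ref{eq4}) for the intersection (Theorem~\ref{CsikCon}), and strictness from Theorem~\ref{StrictAlex}, with $r_0$ chosen to beat the error terms. The only divergence is in your sketches of the deferred ingredients --- the paper proves Theorem~\ref{CsikCon} via Voronoi decomposition and analyticity of the volume functions rather than a spherical-cap analysis, and proves Theorem~\ref{StrictAlex} by lifting to a higher-dimensional continuous expansion and invoking Cauchy's arm lemma and Whiteley's rigidity theorem rather than a self-stress argument --- but the proof of Theorem~\ref{main1} itself is the same.
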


An essential tool in proving Theorem~\ref{main1} is Theorem~\ref{StrictAlex}, which is a strengthening of the result of Sudakov~\cite{Sudakov} later reproved by Alexander in~\cite{Alexander85} and Capoyleas and Pach in~\cite{CapPach}:

We remind that if $K\subset\bbE^n$ is a compact convex set, then $M_n[K]$, the $n$-dimensional mean width of $K$ (up to multiplication by a dimensional constant), can be defined as an integral over the unit $(n-1)$-sphere $S^{n-1}$:
$$
M_n[K] = \int_{S^{n-1}}\max\{\langle x, u\rangle\colon x\in K\}d\sigma(u),
$$
where $\sigma$ is the Lebesgue measure on $S^{n-1}$ and $\langle\cdot,\cdot\rangle$ denotes the dot product in $\bbR^n$.




\begin{theorem}[Sudakov, Alexander, Capoyleas, Pach]\label{CapPachTh}
Let $\bfq= (\bfq_1,\ldots , \bfq_N)$ be an expansion of $\bfp= (\bfp_1,\ldots , \bfp_N)$ in $\bbE^n$, $n\ge 2$. Then
\begin{equation}\label{CapPach_in}
M_{n} [\Conv\{\bfq_1,\dots,\bfq_N\}]\ge M_{n} [\Conv\{\bfp_1,\dots,\bfp_N\}],
\end{equation}
where $\Conv$ stands for the convex hull of a set in $\bbE^n$. 
\end{theorem}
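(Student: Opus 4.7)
The plan is to reduce the general expansion $\bfp\to\bfq$ to a smooth \emph{continuous} expansion via a dimension-doubling device, and then to establish monotonicity of the mean width along any continuous expansion by an infinitesimal argument on the unit sphere.

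First, define a path in $\bbE^{2n}$ by $\tilde\bfp_i(\theta):=(\cos\theta\cdot\bfp_i,\,\sin\theta\cdot\bfq_i)$ for $\theta\in[0,\pi/2]$. A direct computation gives
\begin{equation*}
|\tilde\bfp_i(\theta)-\tilde\bfp_j(\theta)|^2=\cos^2\theta\,|\bfp_i-\bfp_j|^2+\sin^2\theta\,|\bfq_i-\bfq_j|^2,
\end{equation*}
which is monotone non-decreasing in $\theta$ precisely because $|\bfq_i-\bfq_j|\ge|\bfp_i-\bfp_j|$. Hence $\tilde\bfp(\theta)$ is a smooth continuous expansion in $\bbE^{2n}$ joining $(\bfp,0)$ at $\theta=0$ to $(0,\bfq)$ at $\theta=\pi/2$. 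Moreover, a short Fubini-type computation using polar coordinates on $S^{2n-1}$ and positive homogeneity of the support function yields $M_{2n}[K]=c_n\cdot M_n[K]$ for a positive constant $c_n$ and every compact convex $K\subset\bbE^n\subset\bbE^{2n}$. Therefore it is enough to show the mean width is non-decreasing along any smooth continuous expansion in $\bbE^m$ (applied below with $m=2n$).

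For a smooth continuous expansion $\bfp(t)=(\bfp_1(t),\dots,\bfp_N(t))$ in $\bbE^m$, set $A_i(t):=\{u\in S^{m-1}:\langle\bfp_i(t),u\rangle>\langle\bfp_k(t),u\rangle\ \text{for all}\ k\ne i\}$. Away from an exceptional set of measure zero in $t$, the sets $A_i(t)$ partition $S^{m-1}$ up to measure zero, and differentiating under the integral defining $M_m$ gives
\begin{equation*}
\frac{d}{dt}M_m[\Conv\{\bfp_i(t)\}]=\sum_{i}\langle\bfp_i'(t),\bfv_i(t)\rangle,\qquad \bfv_i(t):=\int_{A_i(t)}u\,d\sigma(u).
\end{equation*}
The geometric crux is the identity
\begin{equation*}
\bfv_i=\sum_{j\ne i}\lambda_{ij}(\bfp_i-\bfp_j),\qquad \lambda_{ij}=\lambda_{ji}\ge 0,
\end{equation*}
where $\lambda_{ij}$ is proportional to the $(m-2)$-dimensional spherical volume of the common boundary $\partial A_i\cap\partial A_j$, a subset of the great sphere $(\bfp_i-\bfp_j)^\perp\cap S^{m-1}$. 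This identity follows by applying Stokes' theorem on $S^{m-1}$ to each tangent vector field $X_k(u):=e_k-\langle e_k,u\rangle u$, whose spherical divergence equals $-(m-1)\langle e_k,u\rangle$; summing coordinates recovers $\bfv_i$ as a boundary integral, each boundary piece contributing a multiple of the unit vector $\pm(\bfp_i-\bfp_j)/|\bfp_i-\bfp_j|$. Substituting the identity into the derivative produces the telescoping
\begin{equation*}
\frac{d}{dt}M_m[\Conv\{\bfp_i(t)\}]=\frac{1}{2}\sum_{i<j}\lambda_{ij}\frac{d}{dt}|\bfp_i(t)-\bfp_j(t)|^2\ge 0.
\end{equation*}
Since $M_m$ is Lipschitz along the path (being a convex function of the configuration), absolute continuity promotes the a.e.\ nonnegativity of the derivative to genuine monotonicity; taking $m=2n$ and integrating over $\theta\in[0,\pi/2]$ combined with the proportionality $M_{2n}=c_nM_n$ delivers $M_n[\Conv(\bfq)]\ge M_n[\Conv(\bfp)]$.

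The main obstacle is the tensegrity-flavored identity $\bfv_i=\sum_{j\ne i}\lambda_{ij}(\bfp_i-\bfp_j)$ with all $\lambda_{ij}\ge 0$. The crucial sign is built into the geometry: on the piece of $\partial A_i$ lying in $(\bfp_i-\bfp_j)^\perp$, the outer conormal of $A_i$ points in the direction $-(\bfp_i-\bfp_j)/|\bfp_i-\bfp_j|$ because $A_i$ sits on the positive side of that hyperplane, and carrying Stokes' theorem through with this sign convention yields the correct pairing with $(\bfp_i-\bfp_j)$. A secondary technical point is the handling of non-generic $t$ where support ties occur; this is dispatched by continuity of $M_m$ in the configuration, which makes the set of bad $t$ irrelevant for the integrated inequality.
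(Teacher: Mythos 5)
Your proposal is correct, but note that the paper does not prove Theorem~\ref{CapPachTh} at all: it imports it as a known result of Sudakov, Alexander, and Capoyleas--Pach, and only proves the strict refinement (Theorem~\ref{StrictAlex}) on top of it. So there is no ``paper proof'' to match; what you have written is a legitimate self-contained proof of the cited theorem. Your two ingredients are in fact the same ones the paper deploys in Section~\ref{StrictAS} for the strict version: the path $\tilde\bfp_i(\theta)=(\cos\theta\,\bfp_i,\sin\theta\,\bfq_i)$ is exactly the Bezdek--Connelly dimension-doubling continuous expansion (their Lemma~1, which the paper invokes), and your identity $\bfv_i=\sum_{j\ne i}\lambda_{ij}(\bfp_i-\bfp_j)$ with $\lambda_{ij}$ proportional to $\sigma_{m-2}(\partial A_i\cap\partial A_j)/|\bfp_i-\bfp_j|$ is Alexander's edge-curvature formula in disguise --- the sets $\mathrm{cl}(A_i)\cap\mathrm{cl}(A_j)$ are the normal cones of the edges of the hull, and their $(m-2)$-volumes are the $\beta_{ij}$ appearing in the paper's equation~(\ref{Mnp}); the telescoped derivative $\tfrac12\sum_{i<j}\lambda_{ij}\tfrac{d}{dt}|\bfp_i-\bfp_j|^2$ is the same expression. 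What your route buys is that it rederives that formula from scratch via Stokes' theorem on $S^{m-1}$ (with the correct divergence $-(m-1)\langle e_k,u\rangle$ and the correct outward conormal $-(\bfp_i-\bfp_j)/|\bfp_i-\bfp_j|$), rather than citing~\cite{Alexander85}, and it handles the non-generic times by absolute continuity rather than by the paper's interval-shrinking argument. The only points worth tightening are (a) the case of coincident points, where your $A_i$ as defined with strict inequalities fail to cover the sphere up to measure zero --- harmless here since along your path distinct points stay distinct for $\theta>0$, but worth a sentence --- and (b) a word on why the a.e.\ pointwise derivative formula integrates correctly (Fubini over $(t,u)$ plus the uniform Lipschitz bound), which your absolute-continuity remark essentially supplies.
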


The following theorem that will be proven in Section~\ref{StrictAS} strengthens this result by saying that the equality in~(\ref{CapPach_in}) is obtained only when the point configurations $\bfp$ and $\bfq$ are congruent.
\begin{theorem}\label{StrictAlex}
Let $\bfq= (\bfq_1,\ldots , \bfq_N)$ be an expansion of $\bfp= (\bfp_1,\ldots , \bfp_N)$ in $\bbE^n$, $n\ge 2$. Then
\begin{equation*}
M_{n} [\Conv\{\bfq_1,\dots,\bfq_N\}]\ge M_{n} [\Conv\{\bfp_1,\dots,\bfp_N\}],
\end{equation*}
and if the point configurations $\bfq$ and $\bfp$ are not congruent, then the inequality is strict.
\end{theorem}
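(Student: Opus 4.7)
The plan is to carry out the support-function / tensegrity strategy alluded to in the paper's introduction. Starting from
$$
M_n[\Conv\{\bfp_i\}] = \int_{S^{n-1}} \max_i \langle \bfp_i,u\rangle\,d\sigma(u),
$$
I partition $S^{n-1}$ into the \emph{maximum regions} $R_i(\bfp) = \{u\in S^{n-1}:\langle\bfp_i,u\rangle=\max_k\langle\bfp_k,u\rangle\}$ and set $\bfv_i(\bfp) := \int_{R_i(\bfp)} u\,d\sigma(u)$. This gives the identity $M_n[\Conv\{\bfp_i\}] = \sum_i\langle\bfp_i,\bfv_i(\bfp)\rangle$, and for any second configuration $\bfq$ the pointwise bound $\max_k\langle\bfq_k,u\rangle\ge\langle\bfq_{i^*(u,\bfp)},u\rangle$, where $i^*(u,\bfp)$ is the $\bfp$-maximizer at $u$, integrates to
$$
M_n[\Conv\{\bfq_i\}] \;\ge\; M_n[\Conv\{\bfp_i\}] + \sum_i\langle\bfq_i-\bfp_i,\bfv_i(\bfp)\rangle,
$$
with equality iff the normal fans of $\Conv\{\bfp_i\}$ and $\Conv\{\bfq_i\}$ agree almost everywhere on $S^{n-1}$. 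In particular, under equality, for every edge $(i,j)$ of $\Conv\{\bfp_i\}$ the corresponding difference $\bfq_i-\bfq_j$ is a positive multiple of $\bfp_i-\bfp_j$.

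The main technical step is to establish a \emph{strut-stress decomposition} of the gradient forces: at every configuration $\bfp$ there should exist non-negative coefficients $\omega_{ij}=\omega_{ji}\ge 0$, supported on the edges of the polytope $\Conv\{\bfp_i\}$ (or, more broadly, on a subgraph that is infinitesimally rigid on the vertex set of $\Conv\{\bfp_i\}$), with
$$
\bfv_i(\bfp) = \sum_{j\ne i}\omega_{ij}\,(\bfp_i-\bfp_j) \qquad \text{for every } i.
$$
Geometrically, $\bfv_i(\bfp)$ is an average of outward unit normals at the vertex $\bfp_i$ and one expects it to lie in the convex cone generated by the outward edge vectors at $\bfp_i$. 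Verifying this --- especially showing that the non-negative decomposition can be chosen supported on the $1$-skeleton of $\Conv\{\bfp_i\}$ and is combinatorially rich enough for the subsequent rigidity step --- is, in my view, the main obstacle of the proof. It will require a careful case analysis of the normal fan of $\Conv\{\bfp_i\}$, and possibly a Carath\'eodory-type argument to trim the support of a candidate stress, with extra care in the non-simplicial case.

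With the stress in hand, the strict inequality follows quickly. Assume $\bfq$ is an expansion of $\bfp$ with $M_n[\Conv\{\bfq_i\}]=M_n[\Conv\{\bfp_i\}]$. Equality in the pointwise bound yields coincidence of the normal fans and hence $\bfq_i-\bfq_j=\lambda_{ij}(\bfp_i-\bfp_j)$ with $\lambda_{ij}\ge 1$ on every edge $(i,j)$ of $\Conv\{\bfp_i\}$. Substituting the stress into $\sum_i\langle\bfq_i-\bfp_i,\bfv_i(\bfp)\rangle=0$ and symmetrizing over unordered pairs gives
$$
0 \;=\; \sum_{i<j}\omega_{ij}\langle(\bfq_i-\bfq_j)-(\bfp_i-\bfp_j),\,\bfp_i-\bfp_j\rangle \;=\; \sum_{i<j}\omega_{ij}(\lambda_{ij}-1)\,|\bfp_i-\bfp_j|^2;
$$
each summand is non-negative, so every summand vanishes, forcing $\bfq_i-\bfq_j=\bfp_i-\bfp_j$ on the full support of $\omega$. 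A standard rigidity argument on that support then produces a rigid motion carrying $\bfp$ to $\bfq$, while points that are not vertices of $\Conv\{\bfp_i\}$ (for which $\bfv_i=0$) are pinned down by a separate generic-position perturbation.
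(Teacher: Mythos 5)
Your overall strategy is a legitimate one and is genuinely different from the paper's (the paper lifts $\bfp$ and $\bfq$ into $\bbE^{n+k}$, connects them by a smooth properly expansive motion via Bezdek--Connelly, and then uses Alexander's first-variation formula together with Cauchy's arm lemma and Whiteley's rigidity theorem to show the mean width must strictly increase somewhere along a non-rigid motion). However, your proposal has a genuine gap exactly at the step you flag as the ``main technical step,'' and the gap is worse than you suggest. What your argument needs is not merely that each $\bfv_i(\bfp)$ lies in the cone generated by the outgoing edge vectors at $\bfp_i$ --- that part is true and provable (e.g.\ by reflecting the normal cone $N_i$ across $u_0^\perp$ for each extreme ray $u_0\in N_i$ one shows $\langle \bfv_i,u_0\rangle\ge 0$, hence $\bfv_i$ lies in the dual cone of $N_i$, which is the cone spanned by the vectors $\bfp_i-\bfp_j$ over edges $(i,j)$). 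The essential requirement is the \emph{symmetry} $\omega_{ij}=\omega_{ji}$: without it the sum $\sum_{i}\langle \bfq_i-\bfp_i,\bfv_i(\bfp)\rangle=\sum_{i\ne j}\omega_{ij}\langle \bfq_i-\bfp_i,\bfp_i-\bfp_j\rangle$ does not collapse to $\sum_{i<j}\omega_{ij}(\lambda_{ij}-1)|\bfp_i-\bfp_j|^2$; the cross terms $\langle\bfq_j-\bfp_j,\bfp_i-\bfp_j\rangle$ survive and have no sign. A Carath\'eodory-type trimming performed at each vertex independently can never produce symmetric coefficients (and may also trim the support below what your final rigidity step needs, since you need $\omega_{ij}>0$ on \emph{every} edge, or at least on a connected spanning subgraph of the $1$-skeleton, to propagate $\bfq_i-\bfq_j=\bfp_i-\bfp_j$ to a global translation of the vertex set). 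The symmetric stress does exist --- it is $\omega_{ij}=c\,\beta_{ij}/d_{ij}$, where $\beta_{ij}$ is the edge curvature in Alexander's representation $M_n=c\sum_{(i,j)\in E}\beta_{ij}d_{ij}$, and identifying $\bfv_i=c\sum_j\beta_{ij}(\bfp_i-\bfp_j)/d_{ij}$ amounts to the Schl\"afli-type cancellation in the first variation of that formula --- but proving this is the real content of the theorem of Sudakov/Alexander/Capoyleas--Pach, so it cannot be treated as a plausible side lemma.

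Two smaller points. First, equality of mean widths gives only $\sum_i\langle\bfq_i-\bfp_i,\bfv_i(\bfp)\rangle\le 0$, not $=0$; the equality is recovered only after the (stress-dependent) lower bound $\ge 0$, so the logic should be stated in that order. Second, your treatment of non-vertex points is too vague: the clean argument is that agreement of the normal fans forces $\bfq_i$ to lie in $\Conv\{\bfq_j\colon j \text{ a vertex}\}$, which after the translation is $\Conv\{\bfp_j\colon j \text{ a vertex}\}$, and then the identity $\sum_j\mu_j|\bfp_i-\bfp_j|^2=|\bfp_i-\bfq_i|^2+\sum_j\mu_j|\bfq_i-\bfp_j|^2$ (for $\bfq_i=\sum_j\mu_j\bfp_j$) combined with $|\bfq_i-\bfp_j|\ge|\bfp_i-\bfp_j|$ forces $\bfq_i=\bfp_i$; no ``generic-position perturbation'' is needed. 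Degenerate configurations (coincident points, lower-dimensional hulls) also need explicit handling since your partition of $S^{n-1}$ into maximum regions is only a.e.\ disjoint when the relevant points are distinct. In short: the skeleton is viable and is an interesting alternative to the paper's deformation argument, but the symmetric edge-supported stress is asserted rather than obtained, and the method you propose for obtaining it would not yield the symmetry on which the final computation depends.
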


\begin{remark}
We note that Theorem~\ref{StrictAlex} is interesting on its own and requires some additional technics in its proof, compared to the known proofs of Theorem~\ref{CapPachTh}. Moreover, strictness of the inequality in Theorem~\ref{StrictAlex} is crucial for the proof of Theorem~\ref{main1}.
\end{remark}
\begin{remark}
The spherical analog of Theorem~\ref{CapPachTh} was proved by K.~Bezdek and R.~Connelly in~\cite{BezCon_Sphere}. It is conjectured that the spherical version of Theorem~\ref{StrictAlex} should also be true.
\end{remark}

In the same paper~\cite{CapPach} Capoyleas and Pach show that starting from sufficiently large $r$ we have
\begin{equation}\label{eq3}
\Vol_n\left[\bigcup_{i=1}^N B_n(\bfp_i, r)\right] = \delta_n r^n+ M_{n} [\Conv\{\bfp_1,\dots,\bfp_N\}]r^{n-1} + o(r^{n-1}),
\end{equation}
where $\delta_n=\Vol_n[B_n(0,1)]$. This observation together with Theorem~\ref{StrictAlex} proves inequality~(\ref{main_eq1}) of Theorem~\ref{main1}. In order to prove inequality~(\ref{main_eq2}), we establish the following theorem that was conjectured by Bal\'{a}zs Csik\'{o}s during our conversation in 2009:
\begin{theorem}\label{CsikCon}
For any configuration of points $\bfp=(\bfp_1,\dots,\bfp_N)$ in $\bbE^n$, $n\ge 2$, the following asymptotic equality takes place:
\begin{equation}\label{eq4}
\Vol_n\left[\bigcap_{i=1}^N B_n(\bfp_i, r)\right] = \delta_n r^n- M_{n} [\Conv\{\bfp_1,\dots,\bfp_N\}]r^{n-1} + o(r^{n-1}),
\end{equation}
as $r$ tends to infinity.
\end{theorem}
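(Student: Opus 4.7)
The plan is to compute $\Vol_n[\bigcap_i B_n(\bfp_i, r)]$ in polar coordinates centered at an interior point of $\Conv\{\bfp_1, \dots, \bfp_N\}$ and to extract the two leading terms from a Taylor analysis of the radial function. After a harmless translation, I would assume the origin lies in the interior of $\Conv\{\bfp_1, \dots, \bfp_N\}$; then for every $r \geq \max_i |\bfp_i|$ the origin belongs to every ball, and $K_r := \bigcap_i B_n(\bfp_i, r)$ is a convex body star-shaped about the origin. Writing its radial function as $\rho(r, u)$ for $u \in S^{n-1}$, one has
\[
\Vol_n(K_r) = \frac{1}{n}\int_{S^{n-1}} \rho(r, u)^n \, d\sigma(u).
\]

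Solving the inequality $|tu - \bfp_i| \le r$ in $t \ge 0$ and taking the minimum over $i$ gives the explicit formula
\[
\rho(r, u) = \min_i \Bigl[\langle u, \bfp_i\rangle + \sqrt{r^2 + \langle u, \bfp_i\rangle^2 - |\bfp_i|^2}\,\Bigr].
\]
Expanding the square root as $r + O(1/r)$ uniformly in $u$ and $i$, and then using the elementary Lipschitz bound $|\min_i(g_i + \epsilon_i) - \min_i g_i| \le \max_i |\epsilon_i|$, I obtain
\[
\rho(r, u) = r + f(u) + O(1/r),\qquad f(u) := \min_i \langle u, \bfp_i\rangle,
\]
uniformly on $S^{n-1}$. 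Since $f$ is bounded, a binomial expansion yields $\rho(r, u)^n = r^n + n r^{n-1} f(u) + O(r^{n-2})$, again uniformly in $u$, and integrating then gives
\[
\Vol_n(K_r) = \delta_n r^n + r^{n-1}\int_{S^{n-1}} f(u)\, d\sigma(u) + O(r^{n-2}),
\]
using $|S^{n-1}|/n = \delta_n$.

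The final step is to identify the coefficient of $r^{n-1}$. The substitution $u \mapsto -u$ preserves $\sigma$ and turns $\min_i \langle u, \bfp_i\rangle$ into $-\max_i \langle u, \bfp_i\rangle$, so
\[
\int_{S^{n-1}} f(u)\, d\sigma(u) = -\int_{S^{n-1}} \max_i \langle u, \bfp_i\rangle\, d\sigma(u) = -M_n\bigl[\Conv\{\bfp_1, \dots, \bfp_N\}\bigr],
\]
since $\max_i \langle u, \bfp_i\rangle$ is precisely the support function of the convex hull. Plugging this in and absorbing $O(r^{n-2})$ into $o(r^{n-1})$ yields~(\ref{eq4}).

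The main obstacle I anticipate is controlling the error uniformly in $u$ in spite of the non-smoothness of $\rho(r, u)$ at directions where the minimum is achieved by several balls; the Lipschitz bound on the min of perturbed functions is exactly what sidesteps this issue, without any need to stratify $S^{n-1}$ by which ball is binding. The only other thing to verify is that the implicit constants in the $O(\cdot)$ estimates depend only on $\{\bfp_i\}$ and not on $r$ or $u$, which is immediate from $|\langle u, \bfp_i\rangle| \le |\bfp_i|$ and hence the uniform boundedness of $f$ and of the square-root remainder.
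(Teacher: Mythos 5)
Your proof is correct, and it takes a genuinely different and more elementary route than the paper. The paper proves~(\ref{eq4}) indirectly: it decomposes the intersection of balls into truncated farthest-point Voronoi regions, shows (Theorem~\ref{AnalV}) that the volume is analytic in $1/r$ near infinity, establishes the cancellation $W_{P,\bfp_0,n}'(0)+W_{\bar P,\bfp_0,n}'(0)=0$ for complementary polyhedral cones (Lemma~\ref{WWlemma}), deduces that the $r^{n-1}$ coefficients of the union-volume and intersection-volume are negatives of each other --- first for configurations in general position with $N\le n+1$, then in general via a lift to $\bbE^{n+2k}$, a perturbation/continuity argument, and descent by the operator $\frac{1}{2\pi r}\frac{d}{dr}$ of Lemma~\ref{l3} --- and finally imports the Capoyleas--Pach expansion~(\ref{eq3}) for the union. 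You instead compute the radial function of $\bigcap_i B_n(\bfp_i,r)$ explicitly in polar coordinates and read off the first two terms directly; your argument is self-contained (it does not even use~(\ref{eq3})), and it yields the sharper error $O(r^{n-2})$. What the paper's heavier machinery buys is the full Laurent expansion in $1/r$, which is needed elsewhere (Theorem~\ref{main2} requires differentiating the expansion term by term), whereas your computation only controls the first two coefficients. All the estimates you need are indeed uniform in $u$: the square-root remainder is bounded by $\max_i|\bfp_i|^2/r$ once $r\ge 2\max_i|\bfp_i|$, and the Lipschitz bound for the minimum handles the non-smoothness exactly as you say. One cosmetic point: you should not assume the origin lies in the \emph{interior} of $\Conv\{\bfp_1,\dots,\bfp_N\}$, since that set may have empty interior (e.g.\ for points in a hyperplane); but your argument never uses this --- it only needs the origin to lie in every ball, i.e.\ $r\ge\max_i|\bfp_i|$, which guarantees $K_r$ is convex, contains the origin, and hence is star-shaped about it. With that phrase deleted the proof stands as written.
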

We give a proof of this theorem in Section~\ref{voronoi}.

Another corollary can be obtained from Theorem~\ref{StrictAlex} by applying the $n$-dimensional version of Proposition~1 from~\cite{BezConCsikos}:
\begin{corollary}
If $\bfq= (\bfq_1,\ldots , \bfq_N)$ is an expansion of $\bfp= (\bfp_1,\ldots , \bfp_N)$ in $\bbE^n$, then there exists $r_0>0$ such that for any $r\ge r_0$,
\begin{equation*}
M_n\left[\bigcap_{i=1}^N B_n(\bfp_i, r)\right] \ge M_n\left[\bigcap_{i=1}^N B_n(\bfq_i, r)\right],
\end{equation*}
and if the point configurations $\bfq$ and $\bfp$ are not congruent, then the inequality is strict.
\end{corollary}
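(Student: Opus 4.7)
The plan is to reduce the statement to Theorem~\ref{StrictAlex} by invoking an asymptotic identity that expresses the mean width of an intersection of congruent balls in terms of the mean width of the convex hull of their centers. Specifically, the $n$-dimensional version of Proposition~1 of~\cite{BezConCsikos} should assert that for any fixed configuration $\bfp$,
\begin{equation*}
M_n\!\left[\bigcap_{i=1}^N B_n(\bfp_i,r)\right] + M_n[\Conv\{\bfp_1,\dots,\bfp_N\}] \;=\; r\,\sigma(S^{n-1}) + o(1)
\end{equation*}
as $r\to\infty$. Heuristically, for $r$ large the support function of the intersection in direction $u$ satisfies $h_{\bigcap B(\bfp_i,r)}(u)=r+\min_j\langle\bfp_j,u\rangle$ for a.e.\ $u\in S^{n-1}$, since the maximum of $\langle x,u\rangle$ on the intersection is attained at $\bfp_{i(u)}+ru$, where $\bfp_{i(u)}$ minimizes $\langle\bfp_j,u\rangle$ over $j$. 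Integrating this formula over $S^{n-1}$ and substituting $u\mapsto -u$ in the $\min$-term produces the identity above, with the $o(1)$ correction coming from directions $u$ lying in a shrinking neighborhood of the $(n-2)$-skeleton of the normal fan of $\Conv\{\bfp_j\}$.

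Granted this identity, the deduction is nearly mechanical. Writing the analogous relation for $\bfq$ and subtracting yields
\begin{equation*}
M_n\!\left[\bigcap_i B_n(\bfp_i,r)\right]-M_n\!\left[\bigcap_i B_n(\bfq_i,r)\right] \;=\; M_n[\Conv\{\bfq_1,\dots,\bfq_N\}]-M_n[\Conv\{\bfp_1,\dots,\bfp_N\}]+o(1).
\end{equation*}
By Theorem~\ref{StrictAlex}, the leading constant on the right is non-negative and is strictly positive whenever $\bfp$ and $\bfq$ are not congruent. If $\bfp$ and $\bfq$ are congruent, then the intersections themselves are congruent and both sides vanish; otherwise the positive gap dominates the vanishing correction for every $r\ge r_0$, with $r_0$ depending on $\bfp$ and $\bfq$, yielding the strict inequality.

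The main obstacle is justifying the $n$-dimensional form of Proposition~1 itself. The work breaks into (i)~verifying the pointwise support-function formula by solving the inequalities $|\bfp_{i(u)}-\bfp_k|^2+2r\langle\bfp_{i(u)}-\bfp_k,u\rangle\le 0$ for $u$ in the interior of a full-dimensional normal cone of $\Conv\{\bfp_j\}$, which hold once $u$ is at distance $\gtrsim 1/r$ from the boundary of that cone; and (ii)~bounding the contribution of the remaining ``bad'' set of directions---which has $\sigma$-measure $O(1/r)$ and on which $h_{\bigcap B(\bfp_i,r)}$ is uniformly bounded above by $r$ plus a configuration-dependent constant---to the mean-width integral, yielding the claimed $o(1)$ error. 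Everything else is direct substitution into the integral definition of $M_n$ from the excerpt.
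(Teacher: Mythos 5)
Your proof is correct and takes essentially the same route as the paper: the paper's entire argument is the remark that the corollary follows from Theorem~\ref{StrictAlex} together with the $n$-dimensional version of Proposition~1 of~\cite{BezConCsikos}, which is exactly the asymptotic identity $M_n[\bigcap_{i} B_n(\bfp_i,r)] + M_n[\Conv\{\bfp_1,\dots,\bfp_N\}] = r\,\sigma(S^{n-1}) + o(1)$ that you state and then verify via the support function. The only difference is that you supply a direct derivation of that cited ingredient (including the correct handling of the congruent case, where both sides agree exactly), rather than quoting it.
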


\subsection{Kneser-Poulsen type theorems for the volume of the boundary of ball configurations}
In Theorem~\ref{AnalV} we show that $\Vol_n\left[\bigcup_{i=1}^N B_n(\bfp_i, r)\right]$ and $\Vol_n\left[\bigcap_{i=1}^N B_n(\bfp_i, r)\right]$ are holomorphic functions of $r$ in some punctured neighborhood of infinity and have a pole of order $n$ at $r=\infty$. This implies that for sufficiently large values of $r$,~(\ref{eq3}) and~(\ref{eq4}) can be written as Laurent series of $r$ and can be differentiated term by term with respect to $r$. On the other hand,
$$
\frac{d}{dr}\Vol_n\left[\bigcup_{i=1}^N B_n(\bfp_i, r)\right]=\Vol_{n-1}\left[\Bdy\left[\bigcup_{i=1}^N B_n(\bfp_i, r)\right]\right]
$$
and
$$
\frac{d}{dr}\Vol_n\left[\bigcap_{i=1}^N B_n(\bfp_i, r)\right]=\Vol_{n-1}\left[\Bdy\left[\bigcap_{i=1}^N B_n(\bfp_i, r)\right]\right],
$$
where $\Bdy[X]$ denotes the boundary of a set $X\subset\bbE^n$. Thus Theorem~\ref{StrictAlex} implies the following result:
\begin{theorem}\label{main2}
If $\bfq= (\bfq_1,\ldots , \bfq_N)$ is an expansion of $\bfp= (\bfp_1,\ldots , \bfp_N)$ in $\bbE^n$, for $n\ge 2$, then there exists $r_0>0$ such that for any $r\ge r_0$
\begin{equation}\label{eq5}
\Vol_{n-1}\left[\Bdy\left[\bigcup_{i=1}^N B_n(\bfp_i, r)\right]\right] \le \Vol_{n-1}\left[\Bdy\left[\bigcup_{i=1}^N B_n(\bfq_i, r)\right]\right],
\end{equation}
and
\begin{equation}\label{eq6}
\Vol_{n-1}\left[\Bdy\left[\bigcap_{i=1}^N B_n(\bfp_i, r)\right]\right] \ge \Vol_{n-1}\left[\Bdy\left[\bigcap_{i=1}^N B_n(\bfq_i, r)\right]\right],
\end{equation}
and if the point configurations $\bfq$ and $\bfp$ are not congruent, then the inequalities are strict.
\end{theorem}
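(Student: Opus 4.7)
The plan is to deduce Theorem~\ref{main2} from Theorem~\ref{StrictAlex} via the analyticity-plus-differentiation argument sketched in the text preceding the theorem. The essential additional ingredient is Theorem~\ref{AnalV}, which asserts that $V_U(r):=\Vol_n\left[\bigcup_{i=1}^N B_n(\bfp_i, r)\right]$ and $V_I(r):=\Vol_n\left[\bigcap_{i=1}^N B_n(\bfp_i, r)\right]$ extend to holomorphic functions of $r$ in some punctured neighborhood of $\infty$ with a pole of order $n$. Granting this, near $r=\infty$ both $V_U$ and $V_I$ admit convergent Laurent expansions in $1/r$, whose two leading coefficients are pinned down by (\ref{eq3}) and (\ref{eq4}).

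Next I would differentiate these Laurent expansions term by term, which is legitimate on the annulus of convergence. Combined with the identities
\begin{equation*}
V_U'(r)=\Vol_{n-1}\left[\Bdy\left[\bigcup_{i=1}^N B_n(\bfp_i, r)\right]\right], \qquad V_I'(r)=\Vol_{n-1}\left[\Bdy\left[\bigcap_{i=1}^N B_n(\bfp_i, r)\right]\right],
\end{equation*}
term-by-term differentiation of the two leading terms gives, as $r\to\infty$,
\begin{align*}
\Vol_{n-1}\!\left[\Bdy\!\left[\bigcup_{i=1}^N B_n(\bfp_i, r)\right]\right] &= n\delta_n r^{n-1} + (n-1)\, M_{n}[\Conv\{\bfp_1,\dots,\bfp_N\}]\, r^{n-2} + o(r^{n-2}), \\
\Vol_{n-1}\!\left[\Bdy\!\left[\bigcap_{i=1}^N B_n(\bfp_i, r)\right]\right] &= n\delta_n r^{n-1} - (n-1)\, M_{n}[\Conv\{\bfp_1,\dots,\bfp_N\}]\, r^{n-2} + o(r^{n-2}),
\end{align*}
together with the analogous expansions for $\bfq$. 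Note that the hypothesis $n\ge 2$ guarantees $n-1\ge 1$, so the mean-width term is genuinely present.

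With these expansions in hand, the conclusion follows from Theorem~\ref{StrictAlex}. Since $\bfq$ is an expansion of $\bfp$, one has $M_n[\Conv\{\bfq_1,\dots,\bfq_N\}]\ge M_n[\Conv\{\bfp_1,\dots,\bfp_N\}]$, with strict inequality whenever the configurations are not congruent. The $n\delta_n r^{n-1}$ terms agree for $\bfp$ and $\bfq$, so the sign of the difference between the boundary volumes for $\bfp$ and $\bfq$ is, for all sufficiently large $r$, determined by the $r^{n-2}$ coefficients; this yields (\ref{eq5}) and (\ref{eq6}) with strict inequality in the non-congruent case, for all $r$ exceeding some threshold $r_0$ depending on the two configurations.

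The main obstacle is Theorem~\ref{AnalV} itself: the $o(r^{n-1})$ remainder supplied directly by (\ref{eq3}) and (\ref{eq4}) is insufficient to control the $o(r^{n-2})$ remainder that appears after differentiation, so an honest holomorphic extension near $r=\infty$ is needed. The natural route is via the Voronoi decomposition of $\bbE^n$ associated to the centers $\bfp_i$: for $r$ sufficiently large, the combinatorial type of each piece $B_n(\bfp_i, r)\cap V_i$ (a ball truncated by the finitely many hyperplanes bounding the Voronoi cell $V_i$) stabilizes, and the volume of each such piece can be written as a finite sum of integrals that depend holomorphically on $1/r$ in a neighborhood of the origin. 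Summing over $i$ (for the intersection, a dual decomposition is used) produces the required holomorphic extensions of $V_U$ and $V_I$ and establishes that their poles at $\infty$ are of order exactly $n$.
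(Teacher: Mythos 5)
Your proposal is correct and follows essentially the same route as the paper: invoke Theorem~\ref{AnalV} to get a Laurent expansion of the volume functions near $r=\infty$, differentiate term by term, use the identity between the $r$-derivative of the volume and the surface volume, and conclude from the expansions~(\ref{eq3}),~(\ref{eq4}) together with the strict mean-width inequality of Theorem~\ref{StrictAlex}. Even your sketch of why Theorem~\ref{AnalV} holds (truncated Voronoi regions whose volumes are analytic in $1/r$) matches the paper's Sections~\ref{polytopes} and~\ref{voronoi}.
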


Inequalities~(\ref{eq5}) and~(\ref{eq6}) were proved in~\cite{BezdekConnelly} for all values of $r$, under the additional condition that there exists an analytic expansion from $\bfp$ to $\bfq$. For $n=2$ the question, whether inequality~(\ref{eq6}) holds for any expansion and for all values of $r$ was asked by R.~Alexander in~\cite{Alexander85} and remains open. In~\cite{BezConCsikos} Bezdek, Connelly and Csik\'os give a positive answer to this question when $N\le 4$. On the other hand, in the case of unions of balls Habicht and Kneser provide an example of an expansion, described in~\cite{KleeWagon}, for which inequality~(\ref{eq5}) does not hold for a particular value of $r$. However, Corollary~\ref{main2} implies that by increasing $r$ we can make inequality~(\ref{eq5}) hold.

\section{Strict inequality for the mean width}\label{StrictAS}
In this section we give a proof of Theorem~\ref{StrictAlex}.
\subsection{The case of continuous expansion}
Consider a smooth motion $\bfp(t)= (\bfp_1(t),\dots, \bfp_N(t))$ of a configuration of $N$ points in $\bbE^n$, where $t\in[0,1]$. Let $d_{ij}(t)= |\bfp_i(t)-\bfp_j(t)|$, and let $d_{ij}'(t)$ be the $t$-derivative of $d_{ij}(t)$.
\begin{definition}
A motion $\bfp(t)$ is called \emph{rigid}, if none of the distances $d_{ij}(t)$ changes during the motion. Otherwise the motion $\bfp(t)$ is \emph{non-rigid}.
\end{definition}
\begin{definition}
We call the motion $\bfp(t)$ \textit{properly expansive} if for any $t\in [0,1]$ all derivatives $d_{ij}'(t)$ are non-negative, and each distance $d_{ij}(t)$ either stays constant during the whole motion, or has a strictly positive derivative $d_{ij}'(t)$ for all internal points $t$ of the interval $[0,1]$.
\end{definition}

\begin{proposition}\label{MainCont}
If $\bfp(t)$ is a smooth non-rigid properly expansive motion of $N$ points in $\bbE^n$ defined for $t\in [0,1]$, then there exists an open subinterval $I\subset [0,1]$, such that the function $M_{n}(t)=M_{n} [\Conv\{\bfp_1(t),\dots,\bfp_N(t)\}]$ is continuously differentiable on $I$ and $M_{n}'(t)>0$ for all $t\in I$.
\end{proposition}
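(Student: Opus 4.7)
The plan is to differentiate $M_n(t)$ directly using the combinatorial structure of the polytope $P(t) = \Conv\{\bfp_1(t),\dots,\bfp_N(t)\}$, derive a clean formula expressing $M_n'(t)$ as a non-negative combination of the $d_{ij}'(t)$, and then appeal to a convex-polytope tensegrity rigidity result to obtain strict positivity. Because $\bfp(t)$ is properly expansive, the set $S$ of pairs with $d_{ij}'(t) > 0$ on $(0,1)$ is independent of $t$ and non-empty by non-rigidity. The combinatorial type of $P(t)$ is locally constant on a dense open subset of $(0,1)$, so I would first restrict to an open subinterval $I$ on which the combinatorial type of $P(t)$ (the vertex set and the edge/face structure) is constant and all relevant functions vary smoothly in $t$.

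On such an $I$, for almost every $u\in S^{n-1}$ the support function $h_{P(t)}(u) = \max_i \langle \bfp_i(t), u\rangle$ is attained at a unique vertex $\bfp_{i(u)}(t)$ and the index $i(u)$ is locally constant in $t$. Differentiating under the integral yields
$$M_n'(t) = \sum_i \langle \bfp_i'(t), \bfv_i(t)\rangle, \qquad \bfv_i(t) = \int_{V_i(t)} u\,d\sigma(u),$$
where $V_i(t) = N_i(t)\cap S^{n-1}$ is the spherical image of the normal cone $N_i(t)$ at the vertex $\bfp_i(t)$. Applying the divergence theorem to a constant vector field on $N_i\cap B_R$, and using that the facets of $N_i$ are in bijection with edges $(i,j)\in E(P(t))$ incident to $\bfp_i$ with outward unit normal $(\bfp_j-\bfp_i)/d_{ij}$, I obtain the decomposition
$$\bfv_i(t) = \frac{1}{n-1}\sum_{(i,j)\in E(P(t))}\frac{A_{ij}(t)}{d_{ij}(t)}(\bfp_i-\bfp_j),$$
where $A_{ij}(t)$ is the $(n-2)$-dimensional measure of the facet $F_{ij}\cap S^{n-2}$ in the hyperplane of the corresponding facet of $N_i$ (strictly positive precisely when $(i,j)$ is a genuine edge of $P(t)$). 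Substituting back and using $d_{ij}' = \langle \bfp_i'-\bfp_j',\bfp_i-\bfp_j\rangle/d_{ij}$ together with the symmetry $A_{ij}=A_{ji}$ collapses the double sum to
$$M_n'(t) = \frac{1}{n-1}\sum_{i<j} A_{ij}(t)\,d_{ij}'(t),$$
which is non-negative and recovers the Sudakov-Alexander-Capoyleas-Pach inequality.

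For the strict inequality it suffices to find a pair $(i,j)\in S$ that is an edge of $P(t)$ at some $t\in I$, for then $A_{ij}(t)\,d_{ij}'(t)>0$. Suppose the contrary: on $I$ every edge of $P(t)$ has $d_{ij}'(t)\equiv 0$, while each pair in $S$ is either a vertex-vertex diagonal or involves a non-vertex of $P(t)$. Here I would invoke a Cauchy-Connelly-style tensegrity rigidity theorem for convex polytopes: viewing the edges of $P(t)$ as rigid bars and the vertex-vertex diagonals as struts (which can only lengthen, consistent with proper expansiveness), the vertex framework of a convex polytope is infinitesimally rigid, so the vertex set of $P(t)$ moves on $I$ as a rigid body; factoring out this rigid motion, every vertex is stationary. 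A non-vertex $\bfp_k$ lies in the relative interior of some face $F$ of $P(t)$ and writes as $\sum_{j\in V(F)}\lambda_j\bfp_j$ with $\lambda_j>0$ and $\sum_j\lambda_j(\bfp_k-\bfp_j)=0$; combined with $\langle\bfp_k',\bfp_k-\bfp_j\rangle\geq 0$ for every $j\in V(F)$, this forces $\bfp_k'$ to be orthogonal to the affine hull of $F$, and preservation of the combinatorial type on $I$ rules out any such perpendicular motion, so $\bfp_k'=0$. All points being stationary contradicts $S\neq\emptyset$. The main obstacle is this rigidity step: establishing infinitesimal rigidity of the convex-polytope tensegrity in arbitrary dimension and for arbitrary (not necessarily simplicial) combinatorial type --- precisely the tensegrity input promised in the paper's abstract.
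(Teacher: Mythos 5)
Your derivation of the identity $M_n'(t)=\frac{1}{n-1}\sum_{i<j}A_{ij}(t)\,d_{ij}'(t)$ via normal cones and the divergence theorem is correct, and it amounts to a self-contained proof of Alexander's formula, which the paper simply cites (with the curvatures $\beta_{ij}$ playing the role of your $A_{ij}$). Up to that point your argument and the paper's run in parallel. Two caveats: you assert without proof that the combinatorial type of $P(t)$ is locally constant on a dense open set, whereas the paper only arranges the weaker (and easily justified, via nested intervals and the openness of maximal affine rank) condition that the affine span of \emph{every} subset of the points has constant dimension on $I$; and your argument silently assumes $P(t)$ is full-dimensional, which need not hold.

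The genuine gap is the rigidity step, which you yourself flag as the main obstacle. It cannot be waved at: it is the entire content of the strictness claim. The paper supplies it by combining two specific \emph{continuous-motion} rigidity results --- Theorem~\ref{Cauchy} (Cauchy's arm lemma), applied to each $2$-face of $P(t_0)$ whose boundary edges are inextensible while all other distances are non-decreasing, to conclude that each $2$-face subconfiguration moves rigidly; and Theorem~\ref{Whiteley} (Whiteley's gluing theorem), which assembles rigidity of the edge and $2$-face subconfigurations into rigidity of the whole vertex set. Moreover, your formulation via \emph{infinitesimal} rigidity of the bars-on-edges, struts-on-diagonals tensegrity is actually false in the case that matters: when $\Conv\{\bfp_1(t),\dots,\bfp_N(t)\}$ does not affinely span $\bbE^n$ --- which is unavoidable, since in the application to Theorem~\ref{StrictAlex} the motion lives in $\bbE^{n+k}$ while the endpoint configurations lie in $\bbE^n$ --- a velocity field displacing a single vertex orthogonally to the affine hull preserves every pairwise distance to first order yet is not the restriction of an infinitesimal isometry, so the tensegrity is not infinitesimally rigid in the ambient space and no conclusion about $\bfp'(t)$ can be drawn. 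The paper's argument survives precisely because it uses finite rigidity statements together with proper expansiveness (the vanishing $d_{ij}'(t_0)$ at an interior time forces $d_{ij}$ to be constant on all of $[0,1]$) and because $I$ is chosen so that all affine-span dimensions are constant, which is the hypothesis of Theorem~\ref{Whiteley}. Your treatment of the non-vertex points (the convex-combination averaging argument) is essentially the paper's and is fine once the vertices are known to be stationary; it is the preceding step that must be replaced by the continuous-motion version.
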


Before we give a proof of Proposition~\ref{MainCont}, we would like to formulate two theorems that will be used in the proof. The first theorem is a simplified version of so called ``Cauchy's arm lemma" (Lemma~5 from~\cite{Connelly82}) and the second is Theorem~8.6 from~\cite{Whiteley84}.

\begin{theorem}\label{Cauchy}
Let $\bfq(t)=(\bfq_1(t),\dots,\bfq_n(t))$ be a continuous motion of some point configuration in $\bbE^d$, such that the distances between any two points are non-decreasing functions of $t$.
Assume for some $t=t_0$ the points $\bfq_1(t_0),\dots,\bfq_n(t_0)$ are the vertices of a $2$-dimensional convex $n$-gon, and the distances between any two points connected by some edge of the $n$-gon are constant throughout the motion.
Then the motion $\bfq(t)$ is rigid.
\end{theorem}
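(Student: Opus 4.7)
The plan is to reduce to a one-dimensional statement about the vertex angles $\alpha_i(t):=\angle\bfq_{i-1}(t)\bfq_i(t)\bfq_{i+1}(t)$ (indices modulo $n$) and then exploit the angle-sum rigidity of planar convex polygons. First I would observe that, because the two adjacent edge lengths are constant throughout the motion, the law of cosines expresses each short diagonal $|\bfq_{i-1}(t)\bfq_{i+1}(t)|$ as a strictly increasing function of $\alpha_i(t)$ on $(0,\pi)$. By continuity $\alpha_i(t)\in(0,\pi)$ for $t$ in a neighborhood of $t_0$, so the hypothesis that every non-edge distance is non-decreasing in $t$ forces each $\alpha_i(t)$ to be non-decreasing in $t$ near $t_0$.

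The key step then uses the F\'ary total-curvature inequality: for any closed polygon in Euclidean space, $\sum_i\alpha_i(t)\le(n-2)\pi$, with equality if and only if the polygon is planar and convex. Equality holds at $t=t_0$ by hypothesis, and for $t$ slightly greater than $t_0$ the monotonicity of the $\alpha_i$ gives $\sum_i\alpha_i(t)\ge(n-2)\pi$. Combined with F\'ary this becomes an equality, which simultaneously pins each $\alpha_i(t)$ to $\alpha_i(t_0)$ and keeps the polygon planar convex at $t$; since the edge lengths are also preserved, the polygon at $t$ is then congruent to the polygon at $t_0$. I would next propagate rigidity throughout $[t_0,1]$ by a maximality argument: the set $\{s\in[t_0,1]:\text{the motion is rigid on }[t_0,s]\}$ is closed by continuity and open because at any of its points the polygon is again planar convex, so the local step reapplies.

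The main obstacle is the symmetric extension to $t<t_0$, because F\'ary is one-sided: for $t<t_0$ the monotonicity only yields $\alpha_i(t)\le\alpha_i(t_0)$ and $\sum_i\alpha_i(t)\le(n-2)\pi$, both consistent with a genuine non-rigid motion (for instance, in three-dimensional ambient space a planar convex quadrilateral can be approached from the past by a spatial fold about one diagonal, which preserves that diagonal and strictly decreases the other while keeping all four edges constant). In the two-dimensional ambient setting this obstacle evaporates, since the interior angle sum of a convex polygon is identically $(n-2)\pi$ on the open region of planar convex configurations, so the monotonicity-plus-sum argument works symmetrically in $t$. In higher ambient dimension one would close the gap by appealing to the stronger version of Cauchy's arm lemma from~\cite{Connelly82} as a black box, or equivalently by reducing to the planar case via orthogonal projection onto the $2$-plane of the polygon at $t_0$, together with a separate argument that the non-decreasing distance condition forbids backward out-of-plane folding.
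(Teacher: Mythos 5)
Your forward-in-time argument is correct and complete: for $t\ge t_0$ the law of cosines makes each angle $\alpha_i(t)$ non-decreasing, the polygonal total-curvature inequality $\sum_i(\pi-\alpha_i(t))\ge 2\pi$ (with equality exactly for planar convex polygons) then pins $\sum_i\alpha_i(t)$ at $(n-2)\pi$, and a planar convex polygon is determined up to congruence by its edge lengths and interior angles. This is a genuine, self-contained proof of the substantive half of the statement. Note that the paper gives no proof of this theorem at all: it is quoted as Lemma~5 of~\cite{Connelly82}, i.e.\ the global rigidity of a convex planar polygon viewed as a tensegrity with bars on its edges and struts on its diagonals, so there is no internal argument to compare yours with step by step; your Fenchel-type derivation is an acceptable substitute for that citation in the $t\ge t_0$ direction.

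The backward-in-time half, however, is not a gap you can close, because the obstacle you describe is an actual counterexample to the theorem as literally stated when $d\ge 3$: take a planar convex quadrilateral at $t=t_0$ and, for $t<t_0$, fold it about one diagonal; unfolding back to flat keeps all four edges and the fold diagonal constant while the other diagonal strictly increases, so every pairwise distance is non-decreasing, the configuration at $t_0$ is a convex $4$-gon, and yet the motion is not rigid. Consequently neither of your proposed remedies can work. Connelly's lemma is itself one-sided---it compares the convex planar polygon with configurations whose non-edge distances are \emph{at least} as large, which is precisely the $t\ge t_0$ situation---and there is no ``separate argument that the non-decreasing distance condition forbids backward out-of-plane folding,'' since your own example \emph{is} such a folding and satisfies every hypothesis. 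The statement must therefore be read either with $t_0$ as the initial time of the motion or, as in its sole application in the proof of Proposition~\ref{MainCont}, under the properly expansive hypothesis: there every distance is either constant on all of $[0,1]$ or has strictly positive derivative on the interior, so your conclusion that all distances are constant on $[t_0,t_0+\epsilon]$ already rules out the second alternative for every pair and yields rigidity on the whole interval. With that reading, your forward argument is all that is needed; as written, the last third of your proposal promises an argument that cannot exist.
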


\begin{definition}
We will say that a point $\bfp_0\in \bbE^n$ is in a strictly convex position with respect to a set $S\subset\bbE^n$ if it is not contained in the closed convex hull of $S\setminus\{\bfp_0\}$.
\end{definition}

\begin{theorem}\label{Whiteley}
Let $\bfq(t)=(\bfq_1(t),\dots,\bfq_n(t))$ be a continuous motion of some point configuration in $\bbE^d$, such that the affine span of the point configuration $\bfq(t)$ has the same dimension troughout the motion.
Assume for some $t=t_0$ the following conditions are satisfied:

(i) there exists a convex polytope $P$ such that all of its vertices are from the point set $\bfq(t_0)$ and are in a strictly convex position with respect to $\bfq(t_0)$;

(ii) if some point $\bfq_i(t_0)$ is not a vertex of $P$, then it is an interior point of some edge of $P$;

(iii) for any subset of points $\bfq_{i_1}(t_0), \dots, \bfq_{i_k}(t_0)$ that belong to the same edge or $2$-face of $P$, its motion $\bfq_{i_1}(t), \dots, \bfq_{i_k}(t)$ is rigid.

Then the motion $\bfq(t)$ is rigid.
\end{theorem}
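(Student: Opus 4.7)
The plan is to reduce this statement to the classical Cauchy rigidity theorem for convex polytopes. First, I would use condition (iii) to observe that each $2$-face $F$ of $P$ retains its intrinsic metric shape during the motion: the subconfiguration of $\bfq_i$'s lying on $F$ moves rigidly, and by condition (ii) this subconfiguration contains all the vertices and edge-interior points that describe $F$. In particular, the edges of $P$ stay straight segments of constant length, the interior angles of each $2$-face are preserved, and the deformed polytope $P(t)=\Conv\{\bfq_i(t)\colon \bfq_i(t_0)\text{ is a vertex of }P\}$ inherits a continuous family of combinatorially identical structures from $P(t_0)$ with pairwise congruent corresponding $2$-faces.

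Next, I would apply Cauchy's rigidity theorem: two combinatorially isomorphic convex polytopes in Euclidean space with pairwise congruent corresponding $2$-faces must be congruent. The strict convex position of the vertices in condition (i) is an open condition, and the hypothesis that the affine span has constant dimension rules out degenerations, so in an open neighborhood $I$ of $t_0$ the polytope $P(t)$ remains convex and of the same combinatorial type. Cauchy's theorem then gives that $P(t)$ is congruent to $P(t_0)$ throughout $I$, so the motion of the vertices of $P$ is rigid on $I$. The non-vertex points lie on edges of $P$ with fixed barycentric coordinates (again by condition (iii)), so the full configuration $\bfq(t)$ is rigid on $I$. A standard connectedness argument -- the set of $t$ on which $\bfq(t)$ is congruent to $\bfq(t_0)$ is both open (by local rigidity) and closed (by continuity) -- then propagates rigidity to the whole parameter interval.

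The hard part will be the reduction to Cauchy's theorem in the presence of edge-interior points. The usual sign-change argument around each vertex of $P$ must be adapted to a cell complex whose edges are subdivided by the extra $\bfq_i$'s; Euler's formula still applies, but the sign bookkeeping for dihedral-angle variations requires the rigidity of the $2$-faces from condition (iii) so that those variations behave coherently along subdivided edges. A secondary technical point will be to rule out the scenario in which $P(t)$ loses strict convex position as $t$ drifts away from $t_0$ -- for example, a vertex of $P$ migrating into the relative interior of a face -- which should be handled by exploiting the preserved face metrics together with the constant affine dimension of the configuration.
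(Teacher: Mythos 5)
The paper does not actually prove this statement: it is quoted as Theorem~8.6 of Whiteley's paper \cite{Whiteley84}, where it is derived from the static (infinitesimal) rigidity of convex polytopes whose $2$-faces are made rigid and whose edges may carry subdivision points --- a Dehn--Alexandrov-type theorem --- rather than from the global Cauchy congruence theorem. So your route is genuinely different from the one the paper relies on, and parts of it are sound: the reduction of the edge-interior points to the vertices (a rigid motion of a collinear set with prescribed distances to the two endpoints keeps a point on the segment between them) and the open-closed globalization both work. Note also that once you pass to $P(t)=\Conv\{\bfq_i(t)\colon \bfq_i(t_0)\ \text{is a vertex of}\ P\}$, the worry in your last paragraph about redoing Cauchy's sign-change argument on the subdivided complex evaporates: the subdivision points simply do not appear in the hull, so the classical congruence theorem can be used as a black box.

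The genuine gap is the assertion that ``in an open neighborhood $I$ of $t_0$ the polytope $P(t)$ remains convex and of the same combinatorial type'' with congruent corresponding $2$-faces. Convexity of the hull is automatic and strict convex position is indeed an open condition, but neither fact yields combinatorial equivalence, which is exactly what Cauchy's theorem requires. What you can get cheaply is that each edge and each $2$-face of $P$ persists as a face of $P(t)$ for $t$ near $t_0$: by (iii) its vertex set stays planar and congruent, and its supporting hyperplane can be perturbed while keeping the remaining vertices uniformly on one side. But (a) you must still show that these are \emph{all} the faces of $P(t)$ in those dimensions --- for instance, for a $3$-polytope, that the persisting congruent facets tile the entire boundary sphere of $P(t)$, so that no new facet or diagonal edge has appeared; and (b) when $\dim P\ge 4$ the faces of dimension $\ge 3$ have no reason to stay flat, since condition (iii) only controls edges and $2$-faces, so equivalence of the full face lattices is not even locally evident. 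The natural repair is an induction on $\dim P$: each facet of $P$ is a lower-dimensional convex polytope satisfying hypotheses (i)--(iii), hence moves rigidly by the inductive hypothesis, hence stays flat and congruent, and only then can $P(t)$ be compared with $P(t_0)$ facet by facet. Without this induction and the exhaustion argument in (a), the appeal to Cauchy's theorem does not go through.
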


\begin{proof}[Proof of Proposition~\ref{MainCont}]
We define $A_1,\dots A_{2^N-1}$ to be all non-empty subsets of the set $\{1,\dots, N\}$. We define the sequence of nested open intervals $\{I_0\supseteq I_1\supseteq\dots\supseteq I_{2^N-1}\}$ in the following way:

(i) $I_0=(0,1)$.

(ii) The interval $I_{i}$ is a subinterval of $I_{i-1}$ consisting of more than one point.

(iii) For each $t\in I_i$ the dimension of the affine span of the point set $B_i(t)=\{\bfp_j(t) \mid j\in A_{i}\}$ is constant and does not depend on $t$.

Since the maximality of the dimension of the affine span is an open condition, we can always satisfy properties (ii) and (iii) by chosing $I_i$ as a subinterval of the set of such $t\in I_{i-1}$ for which  the affine span of $B_i(t)$ has maximal dimension.

Finally, we chose $I=I_{2^N-1}$. Thus the interval $I$ satisfies the property that for any $i=1,\dots, 2^N$ the dimension of the affine span of the set $B_i(t)$ is constant and does not depend on $t$.

Let $n_0$ be the dimension of the affine span of all points $\bfp_1(t),\bfp_2(t),\dots,\bfp_N(t)$ for $t\in I$. This means that for each $t\in I$ the point configuration $\bfp(t)$ is contained in some $n_0$-dimensional affine subspace $H(t)$ of $\bbE^n$, hence the convex hull $\Conv\{\bfp_1(t),\dots,\bfp_N(t)\}$ is an $n_0$-dimensional convex polytope $P(t)\subset H(t)$ and possesses $n_0$-dimensional mean width. Let $E(t)$ be the set of pairs $(i,j)$, such that the points $\bfp_i(t)$ and $\bfp_j(t)$ are connected by an edge of the polytope $P(t)$. Each edge $e_{ij}(t)$ of the polytope $P(t)$ has a positive curvature $\beta_{ij}(t)$ defined in the following way: consider an $(n_0-1)$-dimensional plane $h(t)$ in $H(t)$ that is orthogonal to the edge $e_{ij}(t)$ and intersects it at some point $\bfq_0(t)$ that lies in the relative interior of $e_{ij}(t)$. Then $h(t)\cap P(t)$ is a convex $(n_0-1)$-polytope. We define $\beta_{ij}(t)$ to be the discrete Gaussian curvature of the polytope $h(t)\cap P(t)$ in $h(t)$ at the vertex $\bfq_0(t)$. According to~\cite{Alexander85},
$$
M_{n_0}[\Conv\{\bfp_1(t),\dots,\bfp_N(t)\}]=M_{n_0}[P(t)]=c_{n_0}\sum_{(i,j)\in E(t)}\beta_{ij}(t)d_{ij}(t),
$$
and
$$
M_n[\Conv\{\bfp_1(t),\dots,\bfp_N(t)\}]=c_{n_0,n}M_{n_0}[\Conv\{\bfp_1(t),\dots,\bfp_N(t)\}],
$$
where $c_{n_0}, c_{n_0,n}>0$ are dimensional constants, and the sum in the first formula is taken over all pairs $i,j$, such that the points $\bfp_i(t)$ and $\bfp_j(t)$ are connected by an edge of the polytope $P(t)$.

It is shown in~\cite{Alexander85} that
$$
\frac{d}{dt}M_{n_0}[P(t)]=c_{n_0}\sum_{(i,j)\in E(t)}\beta_{ij}(t)d_{ij}'(t),
$$
so
\begin{equation}\label{Mnp}
M_{n}'(t)=\frac{d}{dt}M_{n} [\Conv\{\bfp_1(t),\dots,\bfp_N(t)\}]=c_{n_0,n}c_{n_0}\sum_{(i,j)\in E(t)}\beta_{ij}(t)d_{ij}'(t).
\end{equation}
According to~(\ref{Mnp}), $M_{n}'(t)\ge 0$ for all $t\in I$. Assume that $M_{n}'(t_0)=0$ for some $t_0\in I$. This is possible only if $d_{ij}'(t_0)=0$ for all pairs $(i,j)\in E(t_0)$. Since the motion $\bfp(t)$ is properly expansive, this means that the edges of the polytope $P(t_0)$ have the same length for all $t\in I$. Note that the other points cannot get closer throughout their motion, so by Theorem~\ref{Cauchy} the motion of any subconfiguration of points that belong to the same $2$-face of the polytope $P(t_0)$, is rigid, when $t\in I$. Now by applying Theorem~\ref{Whiteley} we get that the motion of the set of all vertices of the polytope $P(t_0)$ is rigid for $t\in I$.

Finally assume the point $\bfp_i(t_0)$ is not a vertex of $P(t_0)$. Then there is a subset of the set of all vertices of the polytope $P(t_0)$, such that the point $\bfp_i(t_0)$ lies in the relative interior of the convex hull of these points. According to our choice of the interval $I$, the point $\bfp_i(t)$ stays in the affine span of these vertices for all $t\in I$. Since the distances between this point and the vertices cannot decrease as $t$ increases, it means that the point $\bfp_i(t)$ does not move with respect to the polytope $P(t_0)$ as $t$ changes in $I$. Thus the motion $\bfp(t)$ is rigid on the interval $I$, and since it is a properly expansive motion, it is rigid on the whole interval $[0,1]$. This brings us to a contradiction to the fact the motion $\bfp(t)$ was chosen to be non-rigid.
\end{proof}

\subsection{The case of a general expansion}
\begin{proof}[Proof of Theorem~\ref{StrictAlex}]
Assume that the point configurations $\bfp$ and $\bfq$ are not congruent. Then let $k$ be a positive integer, and we regard $\bbE^n$ as the subset $\bbE^n=\bbE^n\times\{0\}\subset\bbE^n\times\bbE^{k}=\bbE^{n+k}$.
We chose $k$ to be sufficiently large, so that there exists a smooth non-rigid properly expansive motion from $\bfp$ to $\bfq$ in $\bbE^{n+k}$. Denote this motion by $\bfp(t)$. According to Lemma~$1$ from~\cite{BezdekConnelly}, for sufficiently large $k$ such a motion always exists.

It follows from Theorem~\ref{CapPachTh} that
\begin{equation*} 
M_{n+k} [\Conv\{\bfp_1(t_2),\dots,\bfp_N(t_2)\}]\ge M_{n+k} [\Conv\{\bfp_1(t_1),\dots,\bfp_N(t_1)\}],
\end{equation*}
for every pair $t_1$, $t_2$, such that $0\le t_1\le t_2\le 1$. Also according to Proposition~\ref{MainCont}, there exists a time interval $I\subset [0,1]$, such that
$$
\frac{d}{dt}M_{n+k} [\Conv\{\bfp_1(t),\dots,\bfp_N(t)\}]>0,
$$
for all $t\in I$, hence we obtain a strict inequality
\begin{equation*}
M_{n+k} [\Conv\{\bfq_1,\dots,\bfq_N\}]> M_{n+k} [\Conv\{\bfp_1,\dots,\bfp_N\}]
\end{equation*}
Since the sets $\Conv\{\bfq_1,\dots,\bfq_N\}$ and $\Conv\{\bfp_1,\dots,\bfp_N\}$ are contained in $\bbE^n$, their $n$-dimensional mean widths can be obtained from $(n+k)$-dimensional ones dividing the later by a dimensional constant $c_{n,n+k}$. Thus we have
\begin{equation*}
M_{n} [\Conv\{\bfq_1,\dots,\bfq_N\}]> M_{n} [\Conv\{\bfp_1,\dots,\bfp_N\}].
\end{equation*}
\end{proof}

\section{Properties of the volume functions}\label{polytopes}
In this section we develop some tools that we use in order to prove Theorem~\ref{AnalV} which says that the volume functions $\Vol_n\left[\bigcup_{i=1}^N B_n(\bfp_i, r)\right]$ and $\Vol_n\left[\bigcap_{i=1}^N B_n(\bfp_i, r)\right]$ considered as functions of $r$, are analytic in a punctured neighborhood of $r=\infty$ and have a pole of order $n$ at infinity. We define a so called polytope truncated by a ball and show that its volume is a meromorphic function of the radius in some neighborhood of $r=\infty$ with a possible pole at infinity. Since the union or intersection of balls can be represented as a disjoint union of polytopes truncated by balls (truncated Voronoi regions), this will prove Theorem~\ref{AnalV}.

\begin{definition}
We call a nonempty set $P\subset\bbE^n$ a \textit{(convex) polyhedral set}, if it is an intersection of $\bbE^n$ with finitely many (may be zero) closed halfspaces.
\end{definition}

Let $\bfp_0$ be a point in $\bbE^n$ and let $P\subset \bbE^n$ be a convex polyhedral set.
\begin{definition}
We will call the set $\hat P(r) = P \cap B_n(\bfp_0, r)$ a (convex) polytope truncated by a ball of radius $r$ centered at $\bfp_0$, or just a truncated polytope.
\end{definition}

Now the focus of our interest is how the volume of a truncated polytope depends on the radius, so we define the following function which is the $n$-dimensional volume of a truncated polytope $\hat P(r)$ depending on $r$:
$$
V_{P,\bfp_0, n}(r) = \begin{cases}
0,&\text{if $r<0$;}\\
\Vol_n \left[\hat P(r)\right],&\text{if $r\ge 0$.}
\end{cases}
$$
Now we formulate the main result of this section:
\begin{theorem}\label{AnalTh}
The function $V_{P,\bfp_0, n}(r)$ can be extended to a meromorphic function of some punctured neighborhood of $r=\infty$ with possibly a unique pole at infinity of order not greater than $n$.
\end{theorem}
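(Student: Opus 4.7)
The proof will go by induction on the dimension $n$. The base case $n=1$ is immediate: a nonempty convex polyhedral set in $\bbR$ is an interval (bounded, half-infinite, or all of $\bbR$), so $V_{P,\bfp_0,1}(r)$ is explicitly a polynomial in $r$ of degree at most $1$ once $r$ exceeds the distances from $\bfp_0$ to any finite endpoints, which is trivially meromorphic at $\infty$ with pole of order at most $1$. For the inductive step I plan to derive a first-order linear ODE that $V_{P,\bfp_0,n}(r)$ satisfies for large $r$, driven by the $(n-1)$-dimensional volumes of the flat faces of $\hat P(r)$, and then solve it explicitly as a Laurent series at infinity.

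Translating so $\bfp_0 = 0$ and writing $P = \bigcap_{i=1}^k \{x : \langle a_i, x\rangle \le b_i\}$ with $|a_i|=1$, the plan is to apply the divergence theorem to the vector field $X(x)=x$ (so $\mathrm{div}\,X = n$) on $\hat P(r)$. For $r$ large enough that the combinatorial structure of $\hat P(r)$ has stabilized, $\partial\hat P(r)$ splits into a spherical piece on $S_{n-1}(0,r)$ (on which $x\cdot\nu = r$, contributing $r\cdot\Vol_{n-1}(P\cap S_{n-1}(0,r)) = rV'(r)$) and flat faces $F_i(r) = P\cap H_i\cap B_n(0,r)$ on the hyperplanes $H_i : \langle a_i,x\rangle = b_i$ (on which $x\cdot\nu = b_i$, contributing $b_i\Vol_{n-1}(F_i(r))$). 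Writing $V(r)$ for $V_{P,0,n}(r)$, this yields
\begin{equation*}
nV(r) = rV'(r) + \sum_{i=1}^k b_i\,\Vol_{n-1}(F_i(r)).
\end{equation*}
Identifying $H_i \cong \bbE^{n-1}$, one has $F_i(r) = (P\cap H_i)\cap B_{n-1}(\bfp_i^*,\sqrt{r^2-b_i^2})$, where $\bfp_i^*$ is the foot of the perpendicular from $0$ to $H_i$, so $\Vol_{n-1}(F_i(r))$ is precisely a truncated-polytope volume of dimension $n-1$ evaluated at radius $r_i' := \sqrt{r^2-b_i^2}$.

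By the inductive hypothesis, $\Vol_{n-1}(F_i(r))$ as a function of $r_i'$ is meromorphic at $r_i'=\infty$ with pole of order at most $n-1$; since $r_i' = r(1-b_i^2/r^2)^{1/2}$ is holomorphic near $r=\infty$ with leading term $r$, composition produces a function meromorphic in $r$ at infinity with pole of order at most $n-1$, so $G(r) := \sum_i b_i \Vol_{n-1}(F_i(r))$ has the same property. The ODE rewrites as $(r^{-n}V(r))' = -r^{-n-1}G(r)$, whose right side has pole of order at most $-2$ at infinity; integrating from $r=\infty$ therefore yields a Laurent series in $r$ supported on powers $r^{k}$ with $k\le -1$, and adding a homogeneous multiple $Ar^n$ gives $V(r) = Ar^n + \sum_{j\le n-1}a_j r^j$, meromorphic at infinity with pole of order at most $n$. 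The main technical hurdles will be confirming stability of the combinatorial face structure of $\hat P(r)$ for all sufficiently large $r$ and verifying that the substitution $r\mapsto\sqrt{r^2-b_i^2}$ transfers meromorphicity at infinity cleanly (it does, because it is biholomorphic in a punctured neighborhood of $r=\infty$ with a simple pole there); the degenerate cases where $P$ is empty or not full-dimensional give $V\equiv 0$ and can be dispatched separately.
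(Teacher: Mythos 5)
Your proposal is correct and follows essentially the same route as the paper: the divergence-theorem identity $nV(r)=rV'(r)+\sum_i b_i\Vol_{n-1}(F_i(r))$ is exactly the paper's cone-decomposition equation~(\ref{DiffEq}) (with $b_i=\epsilon_i h_i$), the recognition of each face volume as an $(n-1)$-dimensional truncated-polytope volume at radius $\sqrt{r^2-h_i^2}$ is the same, and your integration of $(r^{-n}V)'=-r^{-n-1}G$ is the paper's equation~(\ref{DiffEqW}) for $W(s)=s^nV(1/s)$ in the variable $s=1/r$. The only inessential difference is that the combinatorial stabilization you flag as a hurdle is not actually needed, since every $(n-1)$-face contributes $F_i\cap B_n(\bfp_0,r)$ for all $r$.
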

The proof of Theorem~\ref{AnalTh} follows from the next two lemmas.

Let $F_1, \dots, F_m$ be $(n-1)$-dimensional faces of the polyhedral set $P$. For each face $F_i$ we denote by $h_i$ the distance from the point $\bfp_0$ to the hyperplane containing that face, and we put the sign of the face $\epsilon_i =1$, if the point $\bfp_0$ is contained in the halfspace related to that face, or we put $\epsilon_i =-1$ otherwise.

\begin{lemma}
Let $n\ge 2$. Then the function $V_{P,\bfp_0,n}(r)$ satisfies the following differential equation on the interval $r\in (0,+\infty)$:
\begin{equation}\label{DiffEq}
V_{P,\bfp_0,n}(r) = \frac{1}{n}\sum_{i=1}^m \epsilon_i h_i\Vol_{n-1}[F_i \cap B_n(\bfp_0, r)] +
\frac{r}{n}\frac{d}{dr} V_{P,\bfp_0,n}(r).
\end{equation}
\end{lemma}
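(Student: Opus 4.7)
The plan is to derive \eqref{DiffEq} from a single application of the divergence theorem to the radial vector field centered at $\bfp_0$. After translating so that $\bfp_0$ is the origin, I would consider the vector field $X(x)=x$ on $\bbR^n$, whose divergence is the constant $n$. The truncated polytope $\hat P(r)=P\cap B_n(\bfp_0,r)$ is compact with piecewise smooth boundary (it sits inside the ball, so boundedness of $P$ is irrelevant), and the divergence theorem yields
$$
n\,V_{P,\bfp_0,n}(r)=\int_{\hat P(r)}\operatorname{div}X\,dx=\int_{\partial\hat P(r)}\langle x,\nu\rangle\,dA,
$$
where $\nu$ is the outward unit normal.

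Next I would split $\partial\hat P(r)$ into its two natural pieces: the planar pieces $F_i\cap B_n(\bfp_0,r)$ coming from the facets of $P$, and the spherical cap $P\cap\partial B_n(\bfp_0,r)$ (the lower-dimensional faces of $P$ carry zero $(n-1)$-dimensional measure and can be ignored). On each planar piece the outward unit normal $\nu_i$ is constant, and the hyperplane containing $F_i$ has the form $\{x\colon\langle x,\nu_i\rangle=c_i\}$ with $|c_i|=h_i$. A quick sign check shows $c_i=\epsilon_i h_i$: if $\bfp_0$ lies in the halfspace associated to the face ($\epsilon_i=1$), then $\langle 0,\nu_i\rangle=0\le c_i$ forces $c_i=h_i$; if $\bfp_0$ lies on the opposite side ($\epsilon_i=-1$), then $0>c_i$ forces $c_i=-h_i$. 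Hence the flux through the $i$-th planar piece equals $\epsilon_i h_i\,\Vol_{n-1}[F_i\cap B_n(\bfp_0,r)]$. For the spherical cap, the outward normal is $\nu=x/r$, giving $\langle x,\nu\rangle=r$ identically, so the flux equals $r\,\Vol_{n-1}[P\cap\partial B_n(\bfp_0,r)]$.

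It remains to identify $\Vol_{n-1}[P\cap\partial B_n(\bfp_0,r)]$ with $\frac{d}{dr}V_{P,\bfp_0,n}(r)$. This follows from the polar-coordinate decomposition
$$
V_{P,\bfp_0,n}(r)=\int_0^r\Vol_{n-1}[P\cap\partial B_n(\bfp_0,s)]\,ds,
$$
which makes $V_{P,\bfp_0,n}$ absolutely continuous with the claimed weak derivative; since the integrand is continuous in $s\in(0,\infty)$, the function is in fact $C^1$ on $(0,\infty)$ and the identity holds pointwise. Assembling the flux contributions and dividing by $n$ gives exactly \eqref{DiffEq}.

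The only real subtlety is the sign determination $c_i=\epsilon_i h_i$, which the case analysis above settles; everything else is a direct application of the divergence theorem and the shell formula for the volume of $\hat P(r)$. I do not expect any further obstacle, and I would not anticipate the argument to use any property of $P$ beyond its finiteness as an intersection of halfspaces.
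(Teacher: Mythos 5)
Your proof is correct and is essentially the paper's argument in analytic dress: the paper computes $\Vol_n[\hat P(r)]$ as a sum of signed cones with apex $\bfp_0$ over the planar faces and over the spherical cap, which is exactly the divergence theorem for the radial field centered at $\bfp_0$ (each signed cone volume being $\tfrac{1}{n}$ times the flux through its base, with your sign check $c_i=\epsilon_i h_i$ matching the paper's definition of $\epsilon_i$). Both arguments then finish identically by identifying $\Vol_{n-1}[\Bdy[B_n(\bfp_0,r)]\cap P]$ with $\frac{d}{dr}V_{P,\bfp_0,n}(r)$ via the spherical-shell formula.
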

\begin{proof}
One can see that the volume of a truncated polytope $\hat P(r)$ is the volume of the cone with the vertex at $\bfp_0$ about the spherical part of the boundary $\Bdy[B_n(\bfp_0, r)]\cap P$ of $\hat P(r)$ plus the sum of the volumes of cones with the common vertex at the same point $\bfp_0$ about the planar faces of $\hat P(r)$ multiplied by the sign of the corresponding face of the polyhedral set $P$ (see an example in Figure~\ref{pic1}):
$$
V_{P,\bfp_0,n}(r) = \frac{1}{n}\sum_{i=1}^m \epsilon_i h_i\Vol_{n-1}[F_i \cap B_n(\bfp_0, r)] +
\frac{r}{n} \Vol_{n-1}[\Bdy[B_n(\bfp_0, r)]\cap P].
$$

Finally we note that $\Vol_{n-1}[\Bdy[B_n(\bfp_0, r)]\cap P] = \frac{d}{dr} V_{P,\bfp_0,n}(r)$, which completes the proof of the lemma.
\end{proof}

\begin{figure}
\caption{$2$-dimensional volume of the truncated polytope $ABCD$ is the sum of the volumes of the cone $AOD$ about the spherical part of the boundary of $ABCD$, plus the volumes of the cones $AOB$ and $COD$, minus the volume of the cone $BOC$.}\label{pic1}
\includegraphics[width=100mm]{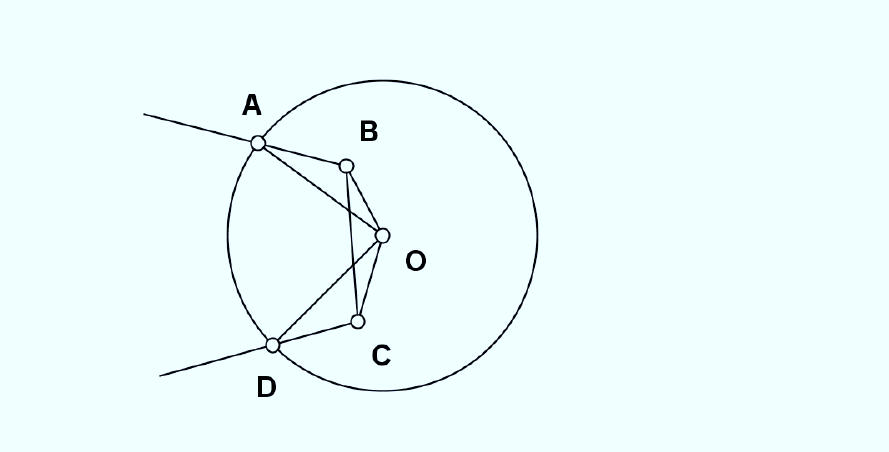}%
\end{figure}

Let the points $\bar\bfp_i$ be the orthogonal projections of the point $\bfp_0$ onto the hyperplanes containing the faces $F_i$. On each of these hyperplanes there is a Euclidian metric induced from the enclosing space, so following the previously chosen notation, one can define the functions
\begin{equation*}
V_{F_i,\bar\bfp_i, n-1}(r) = \begin{cases}
0,&\text{if $r<0$;}\\
\Vol_{n-1} \left[F_i \cap B_{n-1}(\bar\bfp_i, r)\right],&\text{if $r\ge 0$.}
\end{cases}
\end{equation*}
Then for $r\in(\max_{i} h_i,+\infty)$ equation~(\ref{DiffEq}) can be rewritten in the form
\begin{equation*}
V_{P,\bfp_0,n}(r) = \frac{1}{n}\sum_{i=1}^m \epsilon_i h_i V_{F_i,\bar\bfp_i, n-1}\left(\sqrt{r^2-h_i^2}\right) +
\frac{r}{n}\frac{d}{dr} V_{P,\bfp_0,n}(r).
\end{equation*}
If we make a substitution $s=1/r$ and rewrite this equation in terms of the functions
$$
W_{P,\bfp_0,n}(s)=s^nV_{P,\bfp_0,n}(1/s)\quad\text{and}\quad W_{F_i,\bar\bfp_i,n-1}(s)=s^{n-1}V_{F_i,\bar\bfp_i, n-1}(1/s),
$$
we obtain the following equation:
\begin{equation}\label{DiffEqW}
W_{P,\bfp_0,n}'(s)= \sum_{i=1}^m \epsilon_i h_i(1-s^2h_i^2)^{\frac{n-1}{2}}W_{F_i,\bar\bfp_i,n-1}\left(\frac{s}{\sqrt{1-s^2h_i^2}}\right),
\end{equation}
which holds for $s\in(0,\min_i h_i^{-1})$.

\begin{lemma}\label{AnalL}
The function $W_{P,\bfp_0,n}(s)$ can be extended to a holomorphic function in some neighborhood of the point $s=0$.
\end{lemma}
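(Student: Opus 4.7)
The plan is to induct on the ambient dimension $n$, using equation~(\ref{DiffEqW}) as the engine of the inductive step.

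For the base case $n=1$, any nonempty convex polyhedral set $P\subset\bbR$ is an interval (bounded, half-infinite, or all of $\bbR$), and for $r$ sufficiently large $V_{P,\bfp_0,1}(r)$ is an affine function of $r$. Hence $W_{P,\bfp_0,1}(s)=sV_{P,\bfp_0,1}(1/s)$ is a polynomial of degree at most one in $s$, trivially holomorphic at $s=0$.

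For the inductive step, assume the lemma holds in dimension $n-1$. Identifying the affine hull of each face $F_i$ with $\bbE^{n-1}$, the inductive hypothesis yields that $W_{F_i,\bar\bfp_i,n-1}$ extends to a function holomorphic on some disk $\{|t|<\rho_i\}$. The map $s\mapsto s/\sqrt{1-s^2h_i^2}$ (principal branch) is holomorphic at $s=0$ and vanishes there, and $(1-s^2h_i^2)^{(n-1)/2}$ is holomorphic at $s=0$ using the principal branch (the base stays near $1$). Therefore the whole right-hand side of~(\ref{DiffEqW}) extends to a function $g(s)$ holomorphic on some disk $\{|s|<\rho\}$, agreeing with $W_{P,\bfp_0,n}'(s)$ on the real interval $(0,\rho')$ where $\rho'=\min(\rho,\min_i h_i^{-1})$.

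Now I would set $G(s)=\int_0^s g(t)\,dt$, holomorphic on $\{|s|<\rho\}$ with $G(0)=0$, so that $W_{P,\bfp_0,n}(s)-G(s)$ has zero derivative on $(0,\rho')$ and thus equals some constant $C$ there. To promote this to a true holomorphic extension (and pin down $C$ as finite), I use the crude bound $V_{P,\bfp_0,n}(r)\le\Vol_n[B_n(\bfp_0,r)]=\delta_n r^n$, which translates to $|W_{P,\bfp_0,n}(s)|\le\delta_n$ for $s>0$. Combined with $G(s)\to 0$ as $s\to 0$, this forces $W_{P,\bfp_0,n}(s)\to C$ with $|C|\le\delta_n$, so that $G+C$ provides the desired holomorphic extension to the disk $\{|s|<\rho\}$.

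The main obstacle is verifying that the right-hand side of~(\ref{DiffEqW}) extends holomorphically across $s=0$: this requires a consistent choice of the principal branch of $\sqrt{1-s^2h_i^2}$ (ambiguous only when $n$ is even) and holomorphy of the composition $W_{F_i,\bar\bfp_i,n-1}(s/\sqrt{1-s^2h_i^2})$, which is fine because the inner map vanishes at $0$ and is holomorphic there. A secondary subtlety is that the induction must be applied to faces $F_i$ which themselves may be unbounded $(n-1)$-dimensional polyhedral sets, so the full strength of the lemma in dimension $n-1$ is genuinely needed, not just its consequence for bounded polytopes.
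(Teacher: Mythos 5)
Your proposal is correct and follows essentially the same route as the paper: induction on $n$ with the linear base case $n=1$, and the inductive step deducing holomorphy of $W_{P,\bfp_0,n}$ from holomorphy of the right-hand side of~(\ref{DiffEqW}). You simply spell out details the paper leaves implicit (integrating the right-hand side, using the bound $|W_{P,\bfp_0,n}(s)|\le\delta_n$ to identify the constant of integration, and the branch choices), which is a faithful elaboration rather than a different argument.
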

\begin{proof}
We give a proof by induction on the dimension $n$. For $n=1$ and a sufficiently large value of $r$ the function $V_{P,\bfp_0,1}(r)$ is linear for any choice of the point $\bfp_0$ and a polyhedral set $P$, so $W_{P,\bfp_0,1}(s)=sV_{P,\bfp_0,1}(1/s)$ can always be considered as a holomorphic function in some neighborhood of the point $s=0$.

For a general $n>1$, note that the functions $W_{F_i,\bar\bfp_i,n-1}(s)$ are obtained from the volume functions of truncated $(n-1)$-dimensional polytopes in the same way as the function $W_{P,\bfp_0,n}(s)$, so if we assume that the lemma is true in the $(n-1)$-dimensional case, then the functions $W_{F_i,\bar\bfp_i,n-1}\left(\frac{s}{\sqrt{1-s^2h_i^2}}\right)$ are analytic in some neighborhood of $s=0$, and so is the right part of~(\ref{DiffEqW}). This immediately implies that the function $W_{P,\bfp_0,n}(s)$ is also analytic around the point $s=0$.
\end{proof}

\begin{proof}[Proof of Theorem~\ref{AnalTh}]
The proof easily follows from Lemma~\ref{AnalL} and the relation
$$
V_{P,\bfp_0,n}(r)=r^nW_{P,\bfp_0,n}(1/r).
$$
\end{proof}

We will need the following lemma in the proof of Theorem~\ref{CsikCon}
\begin{lemma}\label{WWlemma}
Let $k\le n$ and let $H_1,\dots,H_k\subset\bbE^n$ be closed halfspaces of general position. By $\bar H_1,\dots,\bar H_k$ denote the corresponding complementary closed halfspaces $\bar H_i=\overline{\bbE^n\setminus H_i}$. Let $P=\cap_{i=1}^k H_i$ and $\bar P= \cap_{i=1}^k \bar H_i$. Then for any point $\bfp_0\in\bbE^n$,
$$
W_{P,\bfp_0,n}'(0)+W_{\bar P,\bfp_0,n}'(0)=0.
$$
\end{lemma}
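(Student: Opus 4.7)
The plan is to apply~(\ref{DiffEqW}) at $s=0$ separately to $P$ and $\bar P$, observe that the resulting face contributions pair up with opposite signs, and reduce to a recession-cone symmetry. Let $H_{\partial i}$ denote the bounding hyperplane of $H_i$, let $\bar\bfp_i$ be the orthogonal projection of $\bfp_0$ onto $H_{\partial i}$, $h_i=|\bfp_0-\bar\bfp_i|$, and let $\epsilon_i,\bar\epsilon_i\in\{-1,+1\}$ be the signs attached to $\bfp_0$ by $H_i$ and $\bar H_i$ as in the definitions preceding~(\ref{DiffEq}). The $(n-1)$-dimensional faces of $P$ and $\bar P$ lying on $H_{\partial i}$ are $F_i=H_{\partial i}\cap\bigcap_{j\ne i}H_j$ and $\bar F_i=H_{\partial i}\cap\bigcap_{j\ne i}\bar H_j$. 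Since $(1-s^2h_i^2)^{(n-1)/2}\big|_{s=0}=1$ and $s/\sqrt{1-s^2h_i^2}\big|_{s=0}=0$, evaluating~(\ref{DiffEqW}) at $s=0$ gives
\begin{equation*}
W_{P,\bfp_0,n}'(0)=\sum_{i=1}^k\epsilon_i h_i\,W_{F_i,\bar\bfp_i,n-1}(0),\qquad W_{\bar P,\bfp_0,n}'(0)=\sum_{i=1}^k\bar\epsilon_i h_i\,W_{\bar F_i,\bar\bfp_i,n-1}(0).
\end{equation*}

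If $\bfp_0\notin H_{\partial i}$, then $\bfp_0$ lies strictly on one side of $H_{\partial i}$ and therefore $\epsilon_i=-\bar\epsilon_i$; if $\bfp_0\in H_{\partial i}$, then $h_i=0$ kills the $i$-th term in both sums. Adding the two displays yields
\begin{equation*}
W_{P,\bfp_0,n}'(0)+W_{\bar P,\bfp_0,n}'(0)=\sum_{i=1}^k\epsilon_i h_i\bigl[W_{F_i,\bar\bfp_i,n-1}(0)-W_{\bar F_i,\bar\bfp_i,n-1}(0)\bigr],
\end{equation*}
so the lemma reduces to the single identity $W_{F_i,\bar\bfp_i,n-1}(0)=W_{\bar F_i,\bar\bfp_i,n-1}(0)$ for each $i$. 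By Theorem~\ref{AnalTh} each side is the coefficient of $r^{n-1}$ in the Laurent expansion at $r=\infty$ of the corresponding $(n-1)$-dimensional volume function. A short scaling argument (translate both the polyhedral set and the center of the ball: translations affect only lower-order Laurent terms) identifies this leading coefficient with $\Vol_{n-1}(\mathrm{rec}(F_i)\cap B_{n-1}(0,1))$, the volume of the recession cone of $F_i$ inside the unit ball, computed in the linear hyperplane parallel to $H_{\partial i}$. But $\mathrm{rec}(F_i)$ and $\mathrm{rec}(\bar F_i)$ are obtained in that same linear hyperplane by cutting with the complementary homogeneous halfspaces $\{\langle\nu_j,\cdot\rangle\le 0\}_{j\ne i}$ and $\{\langle\nu_j,\cdot\rangle\ge 0\}_{j\ne i}$, so they are interchanged by $x\mapsto -x$, under which the unit ball is invariant. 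Hence the two volumes coincide, every bracket vanishes, and the lemma follows.

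The only delicate step I expect is the recession-cone reduction in the paragraph above: one must check that translating either $F_i$ or $\bar\bfp_i$ alters $V_{F_i,\bar\bfp_i,n-1}(r)$ only by $O(r^{n-2})$, so that the $r^{n-1}$ Laurent coefficient is genuinely an invariant of $\mathrm{rec}(F_i)$. The hypothesis $k\le n$ is used here: it ensures that $k-1\le n-1$ halfspaces in general position inside $H_{\partial i}$ cut out a full-dimensional recession cone, so that the $r^{n-1}$ asymptotic is captured cleanly; after this reduction the antipodal symmetry of $B_{n-1}(0,1)$ gives the desired equality at once.
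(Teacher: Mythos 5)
Your proof is correct and follows essentially the same route as the paper's: evaluate~(\ref{DiffEqW}) at $s=0$, use $\epsilon_i=-\bar\epsilon_i$ to reduce to $W_{F_i,\bar\bfp_i,n-1}(0)=W_{\bar F_i,\bar\bfp_i,n-1}(0)$, and conclude from the fact that $F_i$ and $\bar F_i$ are centrally symmetric cones whose leading volume coefficient is insensitive to moving the ball's center. The one step you flag as delicate is handled in the paper by sandwiching $V_{F_i,\bar\bfp_i,n-1}(r)$ between $V_{F_i,\bfq_0,n-1}(r-a_i)$ and $V_{F_i,\bfq_0,n-1}(r+a_i)$, where $\bfq_0$ is the projection of $\bfp_0$ onto the apex subspace $\cap_{i}\partial H_i$ --- exactly the translation-invariance of the $r^{n-1}$ coefficient that your recession-cone reduction requires.
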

\begin{proof}
Let $F_1, \dots, F_k$ be $(n-1)$-dimensional faces of the polyhedral set $P$ that are contained in the corresponding boundary hyperplanes $\partial H_1,\dots\partial H_k$ of the halfspaces $H_1,\dots,H_k$. Similarly $\bar F_1, \dots, \bar F_k$ are $(n-1)$-dimensional faces of $\bar P$ contained in the same hyperplanes. By $h_i$ we denote the distance from the point $\bfp_0$ to the hyperplane $\partial H_i$, and we put $\epsilon_i =1$, if the point $\bfp_0$ is contained in the halfspace $H_i$, or $\epsilon_i =-1$ otherwise. Let the points $\bar\bfp_i$ be the orthogonal projections of the point $\bfp_0$ onto the hyperplanes $\partial H_i$. Then according to~(\ref{DiffEqW})
$$
W_{P,\bfp_0,n}'(0)+W_{\bar P,\bfp_0,n}'(0)=\sum_{i=1}^m \epsilon_i h_i \left(W_{F_i,\bar\bfp_i,n-1}(0)-W_{\bar F_i,\bar\bfp_i,n-1}(0)\right).
$$
Now in order to prove the lemma, it is sufficient to show that $W_{F_i,\bar\bfp_i,n-1}(0)=W_{\bar F_i,\bar\bfp_i,n-1}(0)$ for all $i$. We can show this in the following way:

Let the point $\bfq_0\subset\bbE^n$ be the orthogonal projection of the point $\bfp_0$ onto the subspace $\cap_{i=1}^k \partial H_i$. If $a_i$ denotes the distance between the points $\bfq_0$ and $\bar\bfp_i$, then the $(n-1)$-dimensional truncated polytope $F_i\cap B_n(\bar\bfp_i,r)$ contains the truncated cone $F_i\cap B_n(\bfq_0,r-a)$ and is contained in the truncated cone $F_i\cap B_n(\bfq_0,r+a)$. Similarly, $\bar F_i\cap B_n(\bar\bfp_i,r)$ contains the truncated cone $\bar F_i\cap B_n(\bfq_0,r-a)$ and is contained in the truncated cone $\bar F_i\cap B_n(\bfq_0,r+a)$. Since the cones $F_i$ and $\bar F_i$ are symmetric (hence congruent), we have the following inequalities:
$$
V_{F_i,\bar\bfq_0, n-1}(r-a_i)\le V_{F_i,\bar\bfp_i, n-1}(r)\le V_{F_i,\bar\bfq_0, n-1}(r+a_i),
$$
$$
V_{F_i,\bar\bfq_0, n-1}(r-a_i)\le V_{\bar F_i,\bar\bfp_i, n-1}(r)\le V_{F_i,\bar\bfq_0, n-1}(r+a_i).
$$
Note that the series expansions of $V_{F_i,\bar\bfq_0, n-1}(r-a_i)$ and $V_{F_i,\bar\bfq_0, n-1}(r+a_i)$ have the same leading term $cr^{n-1}$ (where $c>0$), so from the above inequalities it follows that the series expansions of $V_{F_i,\bar\bfp_i, n-1}(r)$ and $V_{\bar F_i,\bar\bfp_i, n-1}(r)$ also have the same leading term $cr^{n-1}$, so $W_{F_i,\bar\bfp_i,n-1}(0)=W_{\bar F_i,\bar\bfp_i,n-1}(0)=c$.
\end{proof}

\section{Voronoi decomposition}\label{voronoi}

Let $\bfp= (\bfp_1,\ldots , \bfp_N)$ be a configuration of $N$ distinct points in $\bbE^n$.
Consider the $n$-dimensional nearest point and farthest point Voronoi regions for the point configuration $\bfp$:
$$
C_{i,n}=\{\bfp_0\in\bbE^n \mid \text{for all } j, |\bfp_0-\bfp_i| \le |\bfp_0-\bfp_j|\},
$$
$$
C^{i,n}=\{\bfp_0\in\bbE^n \mid \text{for all } j, |\bfp_0-\bfp_i| \ge |\bfp_0-\bfp_j|\}.
$$
Note that the Voronoi regions are convex polyhedral sets. This allows us to define the truncated nearest point and farthest point Voronoi regions $C_{i,n}(\bfp, r) = C_{i,n}\cap B_n(\bfp_i, r)$ and $C^{i,n}(\bfp, r) = C^{i,n}\cap B_n(\bfp_i, r)$ as corresponding truncated polytopes.

In order to simplify the notation, let us introduce the following functions:
$$
V_n(\bfp, r) = \Vol_n\left[\bigcup_{i=1}^N B_n(\bfp_i, r)\right],
$$
$$
V^n(\bfp, r) = \Vol_n\left[\bigcap_{i=1}^N B_n(\bfp_i, r)\right],
$$
$$
W_n(\bfp, s) = s^nV_n(\bfp, 1/s),
$$
$$
W^n(\bfp, s) = s^nV^n(\bfp, 1/s).
$$

\begin{theorem}\label{AnalV}
Consider a point configuration $\bfp=(\bfp_1,\dots,\bfp_N)$ in $\bbE^n$. Then $V_n(\bfp, r)$ and $V^n(\bfp, r)$ are holomorphic functions of $r$ in some punctured neighborhood of infinity and have a pole of order $n$ at $r=\infty$.
\end{theorem}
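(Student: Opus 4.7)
The plan is to decompose the union and intersection of the balls into disjoint truncated Voronoi polytopes, apply Theorem~\ref{AnalTh} termwise, and then identify the order of the pole at infinity by a direct sandwich estimate against the volumes of single balls.

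First I would verify the Voronoi partition identities. For the union, any point $\bfp_0 \in \bigcup_i B_n(\bfp_i, r)$ lies in some ball, and choosing $i$ that minimizes $|\bfp_0 - \bfp_i|$ places $\bfp_0$ both in $C_{i,n}$ and (automatically) in $B_n(\bfp_i, r)$, hence in $C_{i,n}(\bfp, r)$. The reverse inclusion is trivial, and the pairwise overlaps $C_{i,n} \cap C_{j,n}$ lie on bisecting hyperplanes of Lebesgue measure zero, giving
\[
V_n(\bfp, r) = \sum_{i=1}^N V_{C_{i,n}, \bfp_i, n}(r).
\]
Analogously, $\bfp_0 \in \bigcap_i B_n(\bfp_i, r)$ iff $\max_j |\bfp_0 - \bfp_j| \le r$, and any $i$ attaining this maximum witnesses $\bfp_0 \in C^{i,n}(\bfp, r)$, yielding
\[
V^n(\bfp, r) = \sum_{i=1}^N V_{C^{i,n}, \bfp_i, n}(r).
\]

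Since the Voronoi cells $C_{i,n}$ and $C^{i,n}$ are convex polyhedral sets, each summand above is the volume of a truncated polytope in the sense of Section~\ref{polytopes}. Theorem~\ref{AnalTh} then guarantees that each such summand extends to a meromorphic function of $r$ on some punctured neighborhood of $\infty$ whose only possible singularity there is a pole of order at most $n$ at $r = \infty$. A finite sum of such functions inherits these properties, so $V_n(\bfp, r)$ and $V^n(\bfp, r)$ are holomorphic in a common punctured neighborhood of infinity and have at worst a pole of order $n$ at $r = \infty$.

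Finally, to confirm that the order of the pole is exactly $n$, let $D$ denote the diameter of $\{\bfp_1, \dots, \bfp_N\}$. For $r > D$ the inclusions $B_n(\bfp_1, r - D) \subset \bigcap_i B_n(\bfp_i, r)$ and $\bigcup_i B_n(\bfp_i, r) \subset B_n(\bfp_1, r + D)$ produce the sandwich
\[
\delta_n (r - D)^n \le V^n(\bfp, r) \le V_n(\bfp, r) \le \delta_n (r + D)^n,
\]
where $\delta_n = \Vol_n[B_n(0, 1)]$. Passing to $s = 1/r$, both $W_n(\bfp, s)$ and $W^n(\bfp, s)$ tend to $\delta_n > 0$ as $s \to 0^+$, forcing the pole at $r = \infty$ to have order exactly $n$. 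The only delicate ingredient is Theorem~\ref{AnalTh} itself; once the Voronoi decomposition is recorded, the meromorphy follows formally and the sandwich argument for the leading coefficient is elementary.
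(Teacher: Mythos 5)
Your proposal is correct and follows essentially the same route as the paper: decompose the union (resp.\ intersection) into truncated nearest-point (resp.\ farthest-point) Voronoi regions, apply Theorem~\ref{AnalTh} to each summand, and note that the growth rate $\sim \delta_n r^n$ pins the pole order at exactly $n$. Your sandwich estimate merely makes explicit the paper's one-line remark that the functions grow as $r^n$.
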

\begin{proof}
First we notice that
$$
\bigcup_{i=1}^N B_n(\bfp_i, r) = \bigcup_{i=1}^N C_{i,n}(\bfp, r)
$$
and
$$
\bigcap_{i=1}^N B_n(\bfp_i, r) = \bigcup_{i=1}^N C^{i,n}(\bfp, r).
$$
Since intersection of any two distinct truncated Voronoi regions is a set of dimension at most $n-1$, we have
$$
V_n(\bfp, r) = \sum_{i=1}^N\Vol_n\left[C_{i,n}(\bfp, r)\right],
$$
$$
V^n(\bfp, r) = \sum_{i=1}^N\Vol_n\left[C^{i,n}(\bfp, r)\right].
$$
Each truncated Voronoi region is a truncated polytope, so by Theorem~\ref{AnalTh}, their sums $V_n(\bfp, r)$ and $V^n(\bfp, r)$, are analytic in some punctured neighborhood of $r=\infty$. Since these functions grow as $r^n$ when $r$ tends to infinity, they have a pole of order $n$ at $r=\infty$.
\end{proof}

Finally we give a proof of Theorem~\ref{CsikCon}. We start with the following proposition:
\begin{proposition}\label{N1Prop}
Assume $N\le n+1$, and the points $\bfp=(\bfp_1,\dots,\bfp_N)\subset\bbE^n$ are in general position. Then
\begin{equation}\label{WWform}
\left.\frac{d}{ds}\left(W_n(\bfp,s)+W^n(\bfp,s)\right)\right|_{s=0}=0.
\end{equation}
\end{proposition}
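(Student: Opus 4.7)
The plan is to combine the Voronoi decomposition from Section~\ref{voronoi} with Lemma~\ref{WWlemma}. As a first step, I would rewrite $V_n(\bfp,r)$ and $V^n(\bfp,r)$ as sums of volumes of truncated polytopes, using that truncated Voronoi regions meet only in sets of dimension at most $n-1$:
\begin{equation*}
V_n(\bfp,r)=\sum_{i=1}^N V_{C_{i,n},\bfp_i,n}(r),\qquad V^n(\bfp,r)=\sum_{i=1}^N V_{C^{i,n},\bfp_i,n}(r).
\end{equation*}
Multiplying by $s^n$ and substituting $r=1/s$ gives
\begin{equation*}
W_n(\bfp,s)=\sum_{i=1}^N W_{C_{i,n},\bfp_i,n}(s),\qquad W^n(\bfp,s)=\sum_{i=1}^N W_{C^{i,n},\bfp_i,n}(s),
\end{equation*}
so it suffices to prove $W_{C_{i,n},\bfp_i,n}'(0)+W_{C^{i,n},\bfp_i,n}'(0)=0$ for every fixed $i$.

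Next I would identify each Voronoi region with the halfspace construction of Lemma~\ref{WWlemma}. For fixed $i$, set
\begin{equation*}
H_{ij}=\{\bfp_0\in\bbE^n\mid|\bfp_0-\bfp_i|\le|\bfp_0-\bfp_j|\},\qquad j\ne i,
\end{equation*}
which are closed halfspaces bounded by the perpendicular bisecting hyperplanes of the segments $\bfp_i\bfp_j$. By definition,
\begin{equation*}
C_{i,n}=\bigcap_{j\ne i}H_{ij},\qquad C^{i,n}=\bigcap_{j\ne i}\bar H_{ij}.
\end{equation*}
There are $k=N-1$ such halfspaces, and the hypothesis $N\le n+1$ gives $k\le n$, exactly the range in which Lemma~\ref{WWlemma} applies. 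Applying that lemma with base point $\bfp_0=\bfp_i$ yields $W_{C_{i,n},\bfp_i,n}'(0)+W_{C^{i,n},\bfp_i,n}'(0)=0$, and summing over $i$ produces~(\ref{WWform}).

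The only remaining point that needs verification is the general position of the $k$ halfspaces $H_{ij}$, $j\ne i$, required by Lemma~\ref{WWlemma}. I would argue that when $\bfp_1,\dots,\bfp_N$ are in general position in the usual sense (no $m+1$ of them lie in an affine subspace of dimension $<m$), the perpendicular bisectors of the segments from $\bfp_i$ to the other $N-1$ points intersect transversally, i.e.\ every subcollection of size $\ell\le k$ meets in an affine subspace of codimension exactly $\ell$. This is a short linear-algebraic check on the normal vectors $\bfp_j-\bfp_i$.

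I expect this general-position check to be the only nontrivial part of the argument; everything else is just bookkeeping that assembles the Voronoi decomposition and invokes Lemma~\ref{WWlemma} in each summand.
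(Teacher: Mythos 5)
Your proposal is correct and follows essentially the same route as the paper: decompose $W_n$ and $W^n$ over the nearest-point and farthest-point Voronoi regions, observe that $C^{i,n}$ is the intersection of the halfspaces complementary to the $N-1$ halfspaces defining $C_{i,n}$, and apply Lemma~\ref{WWlemma} with $k=N-1\le n$. The general-position check on the bisecting hyperplanes that you flag is exactly what the hypothesis on $\bfp$ is there to guarantee; the paper leaves that verification implicit.
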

\begin{proof}
Each nearest point Voronoi region $C_{i,n}$ is an intersection of some $N-1$ closed halfspaces with boundary hyperpanes orthogonal to the intervals connecting the point $\bfp_i$ with all other points of the point configuration. Then the farthest point Voronoi region $C^{i,n}$ is the intersection of the complementary closed halfspaces. According to Lemma~\ref{WWlemma},
$$
W_{C_{i,n},\bfp_i,n}'(0)+W_{C^{i,n},\bfp_i,n}'(0)=0.
$$
Now the statement of the Proposition follows from the identities
$$
W_n(\bfp,s)=\sum_{i=1}^N W_{C_{i,n},\bfp_i,n}(s),\quad\text{and}\quad W^n(\bfp,s)=\sum_{i=1}^N W_{C^{i,n},\bfp_i,n}(s).
$$
\end{proof}

\begin{proof}[Proof of Theorem~\ref{CsikCon}]
We will prove Theorem~\ref{CsikCon} by showing that the second coefficient in the series for $V^n(\bfp, r)$ is equal to minus the second coefficient in the series for $V_n(\bfp, r)$. In other words, we will show that
\begin{equation}\label{ddelta}
V_n(\bfp, r)+V^n(\bfp, r)=2\delta_nr^n+o(r^{n-1}), \quad\text{as }r\to\infty.
\end{equation}
Then Theorem~\ref{CsikCon} follows from the asymptotic formula~(\ref{eq3}) obtained by Capoyleas and Pach.

Let $k$ be a non-negative integer, and we regard $\bbE^n$ as the subset $\bbE^n=\bbE^n\times\{0\}\subset\bbE^n\times\bbE^{2k}=\bbE^{n+2k}$, where $k$ is chosen so that $N\le n+2k+1$. We consider the point configuration $\bfp$ as a configuration in $\bbE^{n+2k}$, and we show that
\begin{equation}\label{WW2kform}
\left.\frac{d}{ds}\left(W_{n+2k}(\bfp,s)+W^{n+2k}(\bfp,s)\right)\right|_{s=0}=0.
\end{equation}
Indeed, if the points $\bfp=(\bfp_1,\dots,\bfp_N)$ are in a general position, then it follows from Proposition~\ref{N1Prop}. Now if the points $\bfp=(\bfp_1,\dots,\bfp_N)$ are not in a general position, then we can consider an arbitrarily small perturbation of the configuration $\bfp$ in $\bbE^{n+2k}$, such that the perturbed configuration is in a general position. Hence, in order to prove~(\ref{WW2kform}), it is sufficient to show that the left side of~(\ref{WW2kform}) depends continuously on $\bfp\subset\bbE^{n+2k}$. We show this in the following way:

Let $\tilde\bfp=(\tilde\bfp_1,\dots, \tilde\bfp_N)\subset\bbE^{n+2k}$ be a configuration of $N$ points, such that for every index $i=1,\dots, N$, the inequality $|\tilde\bfp_i-\bfp_i|\le a$ holds for some positive $a\in\bbR$. Then the union (intersection) of balls of radius $r$ centered at $\tilde\bfp$, is contained in the union (intersection) of balls of radius $r+a$, centered at $\bfp$. Thus for all sufficiently small $s>0$,
\begin{multline}\label{ineq_cont1}
\left|W_{n+2k}(\tilde\bfp,s)-W_{n+2k}(\bfp,s)\right| \le s^{n+2k}(V_{n+2k}(\bfp,\frac{1}{s}+a)-V_{n+2k}(\bfp,\frac{1}{s}))\le \\
Ca\left(\frac{1}{s}+a\right)^{n+2k-1}s^{n+2k},
\end{multline}
where $C$ is a positive constant, independent from $s$ and $a$. The last inequality in~(\ref{ineq_cont1}) can be obtained, for example, by applying Theorem~\ref{AnalV}. Similarly one can show that
\begin{equation}\label{ineq_cont2}
\left|W^{n+2k}(\tilde\bfp,s)-W^{n+2k}(\bfp,s)\right| \le Ca\left(\frac{1}{s}+a\right)^{n+2k-1}s^{n+2k}
\end{equation}
Now, using inequalities~(\ref{ineq_cont1}) and~(\ref{ineq_cont2}), we get
\begin{multline*}
\left|\left.\frac{d}{ds}\left(W_{n+2k}(\tilde\bfp,s)+W^{n+2k}(\tilde\bfp,s)\right)\right|_{s=0}-\right.\\ \left.\left.\frac{d}{ds}\left(W_{n+2k}(\bfp,s)+W^{n+2k}(\bfp,s)\right)\right|_{s=0}\right|=
\end{multline*}
\begin{multline*}
\left|\lim_{s\to 0}\frac{1}{s}\left(W_{n+2k}(\tilde\bfp,s)+W^{n+2k}(\tilde\bfp,s)-2\delta_{n+2k}\right) -\right.\\
\left.\lim_{s\to 0}\frac{1}{s}\left(W_{n+2k}(\bfp,s)+W^{n+2k}(\bfp,s)-2\delta_{n+2k}\right)\right| =
\end{multline*}
\begin{multline*}
\lim_{s\to 0}\frac{1}{s} \left|(W_{n+2k}(\tilde\bfp,s)-W_{n+2k}(\bfp,s))+(W^{n+2k}(\tilde\bfp,s)-W^{n+2k}(\bfp,s))\right|\le\\
\lim_{s\to 0} 2Ca\left(\frac{1}{s}+a\right)^{n+2k-1}s^{n+2k-1}=2Ca,
\end{multline*}
which proves that the left side of~(\ref{WW2kform}) depends continuously on $\bfp$. Thus~(\ref{WW2kform}) is proved.


It is not difficult to see that~(\ref{WW2kform}) implies
\begin{equation}\label{ddelta2k}
V_{n+2k}(\bfp, r)+V^{n+2k}(\bfp, r)=2\delta_{n+2k}r^{n+2k}+o(r^{n+2k-1}), \quad\text{as }r\to\infty.
\end{equation}
In order to obtain~(\ref{ddelta}) from~(\ref{ddelta2k}), we use the following lemma which is an easy consequence of Lemma~$7$ from~\cite{BezdekConnelly}:
\begin{lemma}\label{l3}
Let $\bfp= (\bfp_1,\ldots , \bfp_N)$ be a fixed configuration of points in $\bbE^n\subset\bbE^{n+2}$. Then
$$
V_n(\bfp, r)=\frac{1}{2\pi r}\frac{d}{dr}V_{n+2}(\bfp, r),
$$
and
$$
V^n(\bfp, r)=\frac{1}{2\pi r}\frac{d}{dr}V^{n+2}(\bfp, r).
$$
\end{lemma}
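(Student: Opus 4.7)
The plan is to prove both identities by a direct Fubini argument, using the fact that slicing a ball of radius $r$ in $\bbE^{n+2}$ by affine hyperplanes parallel to $\bbE^n$ produces balls of smaller radii in $\bbE^n$. Concretely, write a point of $\bbE^{n+2}=\bbE^n\times\bbE^2$ as $(y,z)$ with $y\in\bbE^n$, $z\in\bbE^2$. Since each $\bfp_i\in\bbE^n$, the condition $(y,z)\in B_{n+2}(\bfp_i,r)$ is $|y-\bfp_i|^2+|z|^2\le r^2$, which means exactly that $y\in B_n(\bfp_i,\sqrt{r^2-|z|^2})$ and $|z|\le r$. Because this reformulation is independent of $i$, it is compatible with both union and intersection: the horizontal slice of $\bigcup_i B_{n+2}(\bfp_i,r)$ (respectively $\bigcap_i B_{n+2}(\bfp_i,r)$) at height $z\in\bbE^2$ with $|z|\le r$ is precisely $\bigcup_i B_n(\bfp_i,\sqrt{r^2-|z|^2})$ (respectively $\bigcap_i B_n(\bfp_i,\sqrt{r^2-|z|^2})$).

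Applying Fubini and then switching to polar coordinates $\rho=|z|$ in $\bbE^2$, I would obtain
$$
V_{n+2}(\bfp,r)=\int_{\{|z|\le r\}}V_n(\bfp,\sqrt{r^2-|z|^2})\,dz=2\pi\int_0^r V_n(\bfp,\sqrt{r^2-\rho^2})\,\rho\,d\rho,
$$
and the identical identity with $V^{n+2}$ and $V^n$. The substitution $u=\sqrt{r^2-\rho^2}$, $u\,du=-\rho\,d\rho$, converts each of these into
$$
V_{n+2}(\bfp,r)=2\pi\int_0^r u\,V_n(\bfp,u)\,du,\qquad V^{n+2}(\bfp,r)=2\pi\int_0^r u\,V^n(\bfp,u)\,du.
$$
Differentiating both sides with respect to $r$ (using the fundamental theorem of calculus, which applies since $V_n(\bfp,\cdot)$ and $V^n(\bfp,\cdot)$ are continuous in $r$ for $r\ge 0$) yields $\tfrac{d}{dr}V_{n+2}(\bfp,r)=2\pi r\,V_n(\bfp,r)$ and $\tfrac{d}{dr}V^{n+2}(\bfp,r)=2\pi r\,V^n(\bfp,r)$, which rearrange to the stated formulas.

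There is no real obstacle here: the key point is simply that $\bfp$ sits inside the $n$-dimensional slice $\bbE^n\times\{0\}$, so radial symmetry of the balls in the two extra coordinates collapses the slicing into a one-variable integral, and this slicing commutes with both Boolean operations. The same argument would fail if the centers had nonzero components in $\bbE^2$, which is why the hypothesis $\bfp\subset\bbE^n$ is essential.
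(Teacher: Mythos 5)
Your proof is correct. The paper does not prove this lemma from scratch; it simply states that it is ``an easy consequence of Lemma~7 from~\cite{BezdekConnelly}'', and that cited lemma is precisely the slicing identity $V_{n+2}(\bfp,r)=2\pi\int_0^r u\,V_n(\bfp,u)\,du$ that you derive via Fubini and polar coordinates in the two extra variables. So your argument is essentially the same route, just made self-contained, and the one point worth being explicit about --- that slicing commutes with both the union and the intersection because the centers lie in $\bbE^n\times\{0\}$ --- you have handled correctly.
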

The configuration $\bfp\subset\bbE^{n+2k}$ is contained in some $n$-dimensional subspace, so according to Lemma~\ref{l3}, we can apply the operator $\frac{1}{2\pi r}\frac{d}{dr}$ to the left and right parts of~(\ref{ddelta2k}) $k$ times. Since according to Theorem~\ref{AnalV} the functions $V_n(\bfp, r)$ and $V^n(\bfp, r)$ are analytic in $r$, the left and the right parts of the identity will remain equal, so this proves~(\ref{ddelta}).
\end{proof}

\section{Acknowledgments}
The author would like to thank R. Connelly, B. Csik\'{o}s and anonymous referees for valuable comments and suggestions.

\bibliography{large_r}
\bibliographystyle{abbrv} 

\end{document}